\date{\today}
\newtheorem{theorem}{Theorem}[section]
\newtheorem{lemma}[theorem]{Lemma}
\theoremstyle{definition}
\theoremstyle{remark}
\numberwithin{equation}{section}
\newcommand{\hyper}[5]{\,{}_{#1}F_{#2}\left(\!\!%
\begin{array}{cc}{\displaystyle{#3}}\\[-0.1ex]
{\displaystyle{#4}} \end{array}\Big| \,{\displaystyle{#5}}
\right)}
\begin{document}

\title[Fourier transforms of some special functions]{Fourier transforms of orthogonal structures on the paraboloid}

\author[H. \"{O}zkan \c{C}etin]{Hasan \"{O}zkan \c{C}etin}
\address[H. \"{O}zkan \c{C}etin]{Ankara University, Faculty of Science, Department of Mathematics, 06100, Tando\u{g}an, Ankara, Türkiye}
\email[H. \"{O}zkan \c{C}etin]{hasanozkan.cetin@msu.edu.tr}

\author[R. Akta\c{s} Karaman]{Rab\.{I}a  Akta\c{s} Karaman}
\address[R. Akta\c{s} Karaman]{Ankara University, Faculty of Science, Department of Mathematics, 06100, Tando\u{g}an, Ankara, T\"{u}rkiye}
\email[R. Akta\c{s} Karaman (corresponding author)]{raktas@science.ankara.edu.tr}

\subjclass[2020]{Primary 33C50; 33C70; 33C45  \ Secondary 42B10}

\keywords{Jacobi polynomials; Laguerre polynomials; Multivariate orthogonal polynomials; Hahn polynomials; Fourier transform; Parseval's identity; Hypergeometric function; contiguous relation}

\begin{abstract}
The purpose of this paper is to obtain Fourier transforms of multivariate orthogonal structures on the paraboloid such as Laguerre polynomials on the paraboloid and Jacobi polynomials on the paraboloid, and to define two new families of multivariate orthogonal functions by using Parseval's identity. In addition, some contiguous relations for these families of functions are given, and the obtained results are expressed in terms of the continuous Hahn polynomials.
\end{abstract}

\maketitle

\section{Introduction}

The theory of orthogonal polynomials originates from classical analysis and was significantly shaped by 19th-century mathematicians like Laplace, Legendre, Jacobi, and Chebyshev. A well-known starting point is the Legendre polynomial sequence, typically introduced as orthogonal over the interval
$[-1,1]$ with a uniform weight function. More broadly, the choice of weight function is central to establishing orthogonality, giving rise to various classical polynomial families such as Chebyshev, Hermite, Laguerre, and Jacobi-each designed for particular analytical contexts and applications. These one-variable families have been generalized in multiple directions: one through the adoption of nonstandard weights (including discrete and $q$-analogue forms), and another by transitioning into multivariate frameworks, where the geometry of the domain plays a decisive role.

\medskip

In this study, we focus on multivariate extensions of orthogonal polynomials together with integral transforms. These transforms allow functions to be moved between different domains, making it easier to analyze and solve complex problems. By combining a function with a suitable kernel, such methods can reveal hidden structures. For example, the Fourier transform connects the time domain with the frequency domain. Classical transforms such as Fourier, Laplace, Beta, Hankel, Mellin, and Whittaker play an important role both in theory and in applications, often appearing in the study of special functions (\cite{3}, \cite{22}-\cite{20}).

\medskip

Consider the Hermite functions, which are products of the Hermite polynomials $H_{n}(x)$ and the Gaussian exponential $\exp({-x^2}/2)$; they are known to be eigenfunctions of the Fourier transform \cite{7,9,19}. Previous works have shown, through the Fourier-Jacobi transform, that Jacobi polynomials can be transformed into Wilson polynomials, with related results linking them to continuous Hahn polynomials \cite{7,9}. Additional studies have explored the Fourier transforms of finite orthogonal polynomial families along with generalizations such as ultraspherical and symmetric orthogonal sequences (\cite{8}, \cite{11}-\cite{20}). In terms of extending orthogonal polynomials to several variables, important advances were made by Tratnik \cite{Trat1,Trat2}, who formulated multivariate versions of both continuous and discrete families from the Askey scheme, including their weight functions, hypergeometric representations, and biorthogonality properties. Koelink et al. \cite{Koe} investigated the convolution identities of Racah coefficients involving continuous Hahn, Hahn, and Jacobi polynomials. Recent studies have also investigated multivariable Fourier transforms and their role in developing new classes of orthogonal functions, particularly on domains such as the unit ball, simplex, and cone, where Parseval's identity plays a key role in establishing orthogonality relations (\cite{AA}, \cite{22}-\cite{24}).

\medskip

Special attention is given to orthogonal systems defined over parabolic regions. Xu \cite{Xu-paraboloid}, for instance, analyzed polynomial systems on paraboloid in $\mathbb{R}^{d+1}$ such as Jacobi polynomials on the paraboloid, demonstrating that these polynomials are eigenfunctions of a second-order differential operator, with eigenvalues depending both on the degree of the polynomial and on another index arising from the specific choice of the orthogonal basis.

\medskip
This work is inspired by recent research \cite{233} on the Fourier transform of orthogonal polynomials on the unit ball. In \cite{AA}, Fourier transforms of the Laguerre polynomials and Jacobi polynomials on the cone have been studied. Our main goal is to investigate the Fourier transforms of multivariate orthogonal polynomials on the paraboloid by using the same approach applied on the cone. Our focus lies on the Jacobi and Laguerre polynomial families in this setting. By applying Fourier transform techniques together with Parseval's identity, we construct two families of special functions, which are expressed through continuous Hahn polynomials.

\medskip

The present study is structured into four main sections. Section 2 is devoted to preliminary concepts and serves to recall essential results concerning classical orthogonal polynomials defined on both the unit ball and the parabolic domain. In Section 3, we explain the main contributions of this work, comprising six central results. Specifically, we provide explicit expressions for the Fourier transforms of Laguerre and Jacobi polynomials on the paraboloid, as well as the derivation of associated families of special functions through the application of Fourier analysis and Parseval's identity. These results are formulated through continuous Hahn polynomials and some contiguous relations for these families of functions are presented. The last section presents the proofs of the aforementioned results, thereby substantiating the theoretical framework established in the preceding sections.

\section{Preliminary Results}\label{sec:bdn}
Here, we outline the necessary background on multivariate orthogonal polynomials, focusing on fundamental properties of classical multivariate systems on both the unit ball and the paraboloid.\\

Throughout this paper, we will use multi-index notation (see \cite{29}):  ${\mathbf{k}}=(k_{1},\ldots,k_{d})\in{\mathbb{N}}_{0}^{d}$ and ${\boldsymbol{x}}=(x_{1},\ldots,x_{d})\in{\mathbb{R}}^{d}$. Assume that $w$ is a weight function on a domain $\Omega \subset{\mathbb{R}%
}^{d}$. Let $\Pi_{m}^{d}$ denote the space of polynomials of degree at most $m$ in $d$
variables. Let the inner product ${\langle}\cdot,\cdot{\rangle}_{w}$ on the space of these polynomials be defined by
\[
{\langle}P,Q{\rangle}_{w}=\int_{\Omega}P(\boldsymbol{x})Q(\boldsymbol{x})w(\boldsymbol{x})\mathrm{d}\boldsymbol{x}
\]
where $d\boldsymbol{x}=\mathrm{d}x_{1}\cdots \mathrm{d}x_{d}.$ if
\[
{\langle}P,Q{\rangle}_{w}=0,\qquad \forall Q\in \Pi_{m-1}^{d},
\]
the polynomial $P$ of degree $m$ is an orthogonal polynomial with respect to this inner product. Let ${\mathcal{U}}_{m}%
^{d}(\Omega,w)$ be the space of orthogonal polynomials of degree $m$ in $d$ variables with respect to this inner product. Then (see \cite{29})
\[
r_{m}^{d}:=\dim{\mathcal{U}}_{m}^{d}(\Omega,w)=\binom{m+d-1}%
{m},\qquad m=0,1,2,\ldots.
\]

 When $d>1$, the space ${\mathcal{U}}_{m}^{d}(\Omega,w)$  has infinitely many possible bases. Furthermore, since orthogonality is defined as orthogonal to all polynomials of lower degrees, the elements of a given basis are not necessarily mutually orthogonal. If
\[
\langle P^m_j, P^m_k \rangle_w = 0 \quad \text{for all } j\neq k,
\]
then a basis $\{P^m_j : 1 \leq j \leq r^d_m\}$ of ${\mathcal{U}}_{m}^{d}(\Omega,w)$  is called a mutually orthogonal basis.

\subsection{Orthogonal polynomials on the unit ball}
Let  $\left \Vert \boldsymbol{x}\right \Vert :=\left(  x_{1}^{2}+\cdots+x_{d}^{2}\right)  ^{1/2}$ for $\boldsymbol{x=}\left(  x_{1},\dots,x_{d}\right)  \in \mathbb{R}^{d}$. For $\mu>-\frac{1}{2}$, let $w_{\mu}$ be the weight function on the unit ball
${\mathbb{B}}^{d}=\{\boldsymbol{x}\in{\mathbb{R}}^{d}:\Vert \boldsymbol{x}\Vert \leq1\}$ (see \cite{29})
\[
w_{\mu}(\boldsymbol{x})=(1-\Vert \boldsymbol{x}\Vert^{2})^{\mu-\frac{1}{2}},\quad \mu
>-\tfrac{1}{2},\quad \boldsymbol{x}\in{\mathbb{B}}^{d}.
\]

In the case $d=1$, the associated orthogonal polynomials are the classical Gegenbauer polynomials $C_{m}^{\mu}$ defined by \cite[p. 277, Eq. (4)]{25}
\begin{equation}\label{hyper}
C_{m}^{\left(  \mu \right)  }\left(  x\right)  =\frac{\left(
2\mu\right)  _{m}}{m!} \,  \hyper{2}{1}{-m,m+2\mu}{\mu+\frac{1}{2}}{\frac{1-x}{2}},
\end{equation}
where $_{2}F_{1}$ denotes the Gauss hypergeometric function with the special case $p=2,q=1$ of the generalized hypergeometric function
(cf. \cite{26})
\begin{equation}
\hyper{p}{q}{\alpha_{1},\alpha_{2},\dots,\alpha_{p}}{\beta_{1},\ \beta_{2},\dots,\beta_{q}}{x}=
{\displaystyle \sum \limits_{m=0}^{\infty}}
\frac{\left(  \alpha_{1}\right)  _{m}\left(  \alpha_{2}\right)  _{m} \dots \left(
\alpha_{p}\right)  _{m}}{\left(  \beta_{1}\right)  _{m}\left(  \beta_{2}\right)
_{m}\dots \left(  \beta_{q}\right)  _{m}}\frac{x^{m}}{m!}\label{genhyper}%
\end{equation}
where $\left(\alpha \right)_{m}$ is called Pochhammer symbol and it is defined as $\left(\alpha \right)_{m}=\alpha \left(  \alpha+1\right) \cdots \left(\alpha+m-1\right),~m\geq1$, $\left(\alpha \right)_{0}=1$. The Gegenbauer polynomials are orthogonal with respect to the weight function $w(x)=\left(  1-x^{2}\right)^{\mu-\frac{1}{2}}$ over the interval $[-1,1]$. Indeed, it follows \cite[p.281, Eq. (28)]{25}
\begin{equation}
 \int \limits_{-1}^{1}
C_{m}^{\left(  \mu \right)  }\left(  x\right)  C_{n}^{\left(
\mu \right)  }\left(  x\right)\left(  1-x^{2}\right)  ^{\mu-\frac{1}{2}}dx=h_{m}^{\mu}~\delta_{m,n}%
,\label{ort} \quad
 \left(  m,n\in \mathbb{N}_{0}:= \mathbb{N}\cup \left \{0\right \}  \right),
\end{equation}
where $\delta_{m,n}$ is the Kronecker delta, $h_{m}^{\mu}$ denotes the norm square given as
\begin{equation}
h_{m}^{\mu}=\frac{\left(  2\mu \right)  _{m}\Gamma \left(  \mu
+\frac{1}{2}\right)  \Gamma \left(  \frac{1}{2}\right)  }{m!\left(
m+\mu \right)  \Gamma \left( \mu \right)  }\label{gnorm}
\end{equation}
and, the Gamma function $\Gamma \left(  x\right)$ is defined by (cf. \cite{26})%
\begin{equation}
\Gamma \left(  x\right)  =\int \limits_{0}^{\infty}s^{x-1}e^{-s}ds,\  \Re \left(
x\right)  >0.\label{2}%
\end{equation}
Let ${\mathcal{U}}_{m}^{d}({\mathbb{B}}^{d},w_{\mu})$ be the space of orthogonal polynomials of degree $m$ for the weight function $w_{\mu}(\boldsymbol{x})$.  The orthogonal polynomials of degree $m$ are eigenfunctions of a second-order differential operator \cite[p.141, Eq. (5.2.3)]{29}: for $P\in{\mathcal{U}}_{m}^{d}({\mathbb{B}}^{d},w_{\mu})$
\begin{equation*}
\left(  \Delta-\left \langle x,\nabla \right \rangle ^{2}-\left(  2\mu
+d-1\right)  \left \langle x,\nabla \right \rangle \right)  P=-m\left(
m+2\mu+d-1\right)P
\end{equation*}
where $\Delta$ and $\nabla$ denote the Laplace operator and the gradient operator, respectively. When $d>1$, this space possesses a variety of distinct bases; however, we focus primarily on those orthogonal polynomials that can be constructed in terms of Gegenbauer polynomials $C_{m_{j}}^{\left(\lambda_{j}\right)}\left(x_{j}\right)$ as follows \cite[p.143]{29}
\begin{equation}
P_{\mathbf{m}}^{\mu}\left(  \boldsymbol{x}\right)  =\prod \limits_{j=1}%
^{d}\left(  1-\left \Vert \boldsymbol{x}_{j-1}\right \Vert ^{2}\right)
^{\frac{m_{j}}{2}}C_{m_{j}}^{\left(  \lambda_{j}\right)  }\left(  \frac{x_{j}%
}{\sqrt{1-\left \Vert \boldsymbol{x}_{j-1}\right \Vert ^{2}}}\right) \label{P}%
\end{equation}
where $\lambda_{j}=\mu+\left \vert \mathbf{m}^{j+1}\right \vert +\frac{d-j}{2},$
and
\begin{equation}\label{notation}
\begin{cases}
\boldsymbol{x}_{0}   =0,\  \  \boldsymbol{x}_{j}=\left(  x_{1},\dots,x_{j}\right)  , \\
\mathbf{m}   =\left(  m_{1},\dots,m_{d}\right)  ,\  \text{\ }\left \vert
\mathbf{m}\right \vert =m_{1}+\cdots+m_{d}=m,\\
\mathbf{m}^{j}   =\left(  m_{j},\dots,m_{d}\right)  ,\text{ \ }\left \vert
\mathbf{m}^{j}\right \vert =m_{j}+\cdots+m_{d},\  \text{\  \ }1\leq j\leq d,
\end{cases}
\end{equation}
and $\mathbf{m}^{d+1}:=0.$ They satisfy the following orthogonality relation
\[
\int \limits_{\mathbb{B}^{d}}
P_{\mathbf{m}}^{\mu}\left(  \boldsymbol{x}\right)  P_{\mathbf{n}}^{\mu}\left(
\boldsymbol{x}\right)w_{\mu}\left(  \boldsymbol{x}\right)d\boldsymbol{x}=h_{\mathbf{m}}^{\mu}\delta_{\mathbf{m}%
,\mathbf{n}}
\]
where $\delta_{\mathbf{m},\mathbf{n}}=\delta_{m_{1},n_{1}}\cdots \delta
_{m_{d},n_{d}}$ and $h_{\mathbf{m}}^{\mu}$ denotes the norm square given as (cf. \cite{29})%
\begin{equation}
h_{\mathbf{m}}^{\mu}=\frac{\pi^{d/2}\Gamma \left(  \mu+\frac{1}{2}\right)
\left(  \mu+\frac{d}{2}\right)  _{\left \vert \mathbf{m}\right \vert }}%
{\Gamma \left(  \mu+\frac{d+1}{2}+\left \vert \mathbf{m}\right \vert \right)
}\prod \limits_{j=1}^{d}\frac{\left(  \mu+\frac{d-j}{2}\right)  _{\left \vert
\mathbf{m}^{j}\right \vert }\left(  2\mu+2\left \vert \mathbf{m}^{j+1}%
\right \vert +d-j\right)  _{m_{j}}}{m_{j}!\left(  \mu+\frac{d-j+1}{2}\right)
_{\left \vert \mathbf{m}^{j}\right \vert }}.\label{Norm}%
\end{equation}

\subsection{Orthogonal polynomials on the paraboloid}

We recall orthogonal structures on the solid paraboloid of the revolution
\begin{equation}\label{eq:conedomain}
{\mathbb{U}}^{d+1}=\{(t,\boldsymbol{x})\in{\mathbb{R}}^{d+1}:\, {\Vert \boldsymbol{x}\Vert}^2 \leq
t,\,\boldsymbol{x}\in{\mathbb{R}}^{d},\,0\leq t \leq b\},
\end{equation}
which is bounded by the surface ${{\mathbb{U}}_0}^{d+1}=\{(t,\boldsymbol{x}): {\Vert \boldsymbol{x}\Vert}=
\sqrt{t},\,\boldsymbol{x}\in{\mathbb{R}}^{d},\,0\leq t \leq b\}$ and the hyperplane $t = b$.
The orthogonality is given with respect to the weight function \cite{Xu-paraboloid}
\begin{equation}
W_{\mu}(t,\boldsymbol{x})=\rho(t)(t-\Vert \boldsymbol{x}\Vert^{2})^{\mu-\frac{1}{2}},\quad \mu
>-\tfrac{1}{2}\label{eq:Wmu-paraboloid}%
\end{equation}
where $\rho$ is the Jacobi weight function or the Laguerre weight function. In the case of $b=1$, Xu \cite{Xu-paraboloid} studied a family of orthogonal polynomials with respect to the weight function $\rho(t)=t^{{\beta}}(1-t)^{{\gamma}}$, which are called Jacobi polynomials in the paraboloid. It was shown that these polynomials satisfy a second-order differential equation, acting as eigenfunctions of the associated operator. However, unlike the cases of the cone and the hyperboloid where the eigenvalue structure remains invariant under different orthogonal bases, the corresponding eigenvalues here depend not only on the polynomial degree but also on an additional index determined by the specific choice of orthogonal basis. On the other hand, the case $b=\infty$ gives Laguerre polynomials on the paraboloid, with the weight function $\rho(t)=t^{{\beta}}{\mathrm{e}}^{-t}$.

We define the inner product
\[
{\langle}f,g{\rangle}_{\mu}:=\int_{{\mathbb{U}}^{d+1}}f(t,\boldsymbol{x})g(t,\boldsymbol{x})W_{\mu
}(t,\boldsymbol{x})\mathrm{d}\boldsymbol{x}\mathrm{d}t
\]
on the paraboloid ${\mathbb{U}}^{d+1}$. By using the change of variable, the following equality is satisfied
\[
\int_{{\mathbb{U}}^{d+1}}f(t,\boldsymbol{x})\mathrm{d}\boldsymbol{x}\mathrm{d}t=\int
_{0}^{b}t^{d/2}\int_{{\mathbb{B}}^{d}}f(t,\sqrt{t}\boldsymbol{y})\mathrm{d}\boldsymbol{y}\,\mathrm{d}t.
\]
For $m=0,1,2,...,$ let ${\mathcal{U}}_{m}({\mathbb{U}}^{d+1},W_{\mu})$ denote the space of orthogonal
polynomials of degree $m$ in $(t,\boldsymbol{x})$ with respect to the inner product
${\langle}.,.{\rangle}_{\mu}$ on the paraboloid.

A basis of this space can be given in terms of orthogonal polynomials on the unit ball and a family of polynomials in one variable such as Jacobi polynomials and Laguerre polynomials. These two families are given as follows.

\subsubsection{Jacobi polynomials on the paraboloid}

In the case of $b=1$ and $\rho(t)=t^{{\beta}}(1-t)^{{\gamma}}$, the Jacobi polynomials on the paraboloid are orthogonal with respect to the weight function $$W_{\beta,\gamma,\mu}(t,\boldsymbol{x})=t^{{\beta}}(1-t)^{{\gamma}}(t-\Vert \boldsymbol{x}\Vert^{2})^{\mu-\frac{1}{2}},$$ and are defined in terms of orthogonal polynomials on the unit ball and Jacobi polynomials in \cite{Xu-paraboloid} as follows.\\
Let $\beta>-\frac{d+1}{2}$ and $\gamma>-1$. Let ${\mathbb{P}}_{n} = \{P_{\mathbf{k}}^n: |{\mathbf{k}}| =n\}$ be an orthogonal basis with parity of ${\mathcal{U}}_{m}({\mathbb{B}}^{d},w_{\mu})$. For $n \le m$, Jacobi polynomials on the paraboloid are defined as
\begin{equation}
\label{eq:sQcone}{\mathsf{Q}}_{{\mathbf{k}},m}^{n} (t,\boldsymbol{x}) = P_{m-n}^{n+\beta+\mu+\frac{d-1}{2},\gamma}(1-2t) t^{n/2} P_{\mathbf{k}}^n \left( \frac{\boldsymbol{x}}{\sqrt{t}}\right) , \qquad|{\mathbf{k}}|
=n, \quad0 \le n \le m.
\end{equation}
Then ${\mathbb{Q}}_{n,m} = \{ {\mathsf{Q}%
}_{{\mathbf{k}},m}^{n}: |{\mathbf{k}}| =n, \, 0 \le n \le m\}$ is an orthogonal basis of
${\mathcal{U}}_{m}({\mathbb{U}}^{d+1}, W_{\beta,\gamma,\mu})$ \cite{Xu-paraboloid}.

Jacobi polynomial $P_{m}^{({\alpha},{\beta})}$ is defined, for ${\alpha},{\beta}>-1$, by
\begin{align}
P_{m}^{(\alpha,\beta)}(t)=\frac{({\alpha}+1)_{m}}{m!} \hyper{2}{1}{-m,m+{\alpha}+{\beta}+1}{{\alpha}+1}{\frac{1-t}{2}},
\end{align}\label{200}
in terms of the hypergeometric function and it satisfies the orthogonal relation \cite{25}
\begin{align}
\int_{-1}^{1}P_{m}^{({\alpha},{\beta})}(t)P_{n}^{({\alpha},{\beta}%
)}(t)(1-t)^{{\alpha}}(1+t)^{{\beta}}\mathrm{d}t=h_{m}^{({\alpha},{\beta}%
)}\delta_{m,n} \label{ort-J},
\end{align}
where
\[
h_{m}^{({\alpha},{\beta})}=\frac{2^{\alpha+\beta+1}\Gamma \left(  m+\alpha+1\right)  \Gamma \left(
m+\beta+1\right)  }{\left(  2m+\alpha+\beta+1\right)  \Gamma \left(
m+\alpha+\beta+1\right)  m!}.
\]

\subsubsection{Laguerre polynomials on the paraboloid}

In the case of $b=\infty$ and $\rho(t)=t^{{\beta}}{\mathrm{e}}^{-t}$, Laguerre polynomials on the paraboloid are orthogonal with respect to the weight function $$W_{\beta,\mu}(t,\boldsymbol{x})=t^{{\beta}}{\mathrm{e}}^{-t}(t-\Vert \boldsymbol{x}\Vert^{2})^{\mu-\frac{1}{2}},$$ and they are given in terms of orthogonal polynomials on the unit ball and Laguerre polynomials as follows.\\
Let $\beta>-\frac{d+1}{2}$ and ${\mathbb{P}}_{n} = \{P_{\mathbf{k}}^n: |{\mathbf{k}}| =n\}$ be an orthogonal basis with parity of ${\mathcal{U}}_{m}({\mathbb{B}}^{d},w_{\mu})$. For $n \le m$, Laguerre polynomials on the paraboloid are defined as
\begin{equation}
\label{eq:sQpara}{\mathsf{R}}_{{\mathbf{k}},m}^n (t,\boldsymbol{x}) = L_{m-n}^{n+\beta+\mu+\frac{d-1}{2}}(t) t^{n/2} P_{\mathbf{k}}^n \left( \frac{\boldsymbol{x}}{\sqrt{t}}\right) , \qquad|{\mathbf{k}}|
=n, \quad0 \le n \le m
\end{equation}
where
Laguerre polynomial $L_{m}^{{\alpha}}$ defined, for ${\alpha}>-1$, by
\begin{equation}
L_{m}^{{\alpha}}(t)=\frac{({\alpha}+1)_{m}}{m!} \hyper{1}{1}{-m}{\alpha+1}{t} =\frac{({\alpha}+1)_{m}}{m!}\sum_{k=0}^{m}\frac{(-m)_{k}}{({\alpha
}+1)_{k}k!}t^{k}\label{102}
\end{equation}
satisfies the orthogonal relation \cite{25}
\begin{align}
\int_{0}^{\infty}L_{m}^{{\alpha}}(t)L_{n}^{{\alpha}}(t)t^{{\alpha}}%
{\mathrm{e}}^{-t}\mathrm{d}t=\frac{\Gamma({\alpha}+m+1)}{m!}\delta_{m,n}\label{ort-L}.
\end{align}
Then ${\mathbb{R}}_{n,m} = \{ {\mathsf{R}%
}_{{\mathbf{k}},m}^n: |{\mathbf{k}}| =n, \, 0 \le n \le m\}$ is an orthogonal basis of
${\mathcal{U}}_{m}({\mathbb{U}}^{d+1}, W_{\beta,\mu}).$ Indeed, it follows from the definition of the polynomial
\begin{align*}
&  {\langle}\mathsf{R}_{{\mathbf{k}},m}^{n},\mathsf{R}_{{\mathbf{k}}^{\prime
},m^{\prime}}^{n^{\prime}}{\rangle}_{\beta,\mu}\\
\phantom{olaola} &  =\int_{0}^{\infty}t^{\frac{1}{2}\left(  d-1+n+n^{\prime
}\right)  +\mu+\beta}L_{m-n}^{\alpha_{n}}(t)L_{m^{\prime}-n^{\prime}}%
^{\alpha_{n^{\prime}}}(t)e^{-t}\mathrm{d}t\int_{{\mathbb{B}}^{d}%
}P_{{\mathbf{k}}}^{n}(\boldsymbol{y})P_{{\mathbf{k}}^{\prime}}^{n^{\prime}}\!(\boldsymbol{y})w
_{\mu}(\boldsymbol{y})\mathrm{d}\boldsymbol{y}\\
\phantom{olaola} &  =\int_{0}^{\infty}\!L_{m-n}^{\alpha_{n}}(t)L_{m^{\prime
}-n}^{\alpha_{n}}t^{\alpha_{n}}e^{-t}\mathrm{d}t\! \int_{{\mathbb{B}}^{d}%
}\!P_{{\mathbf{k}}}^{n}(\boldsymbol{y})P_{{\mathbf{k}}^{\prime}}^{n}(\boldsymbol{y})w_{\mu
}(\boldsymbol{y})\mathrm{d}\boldsymbol{y}\, \delta_{n,n^{\prime}}\\
\phantom{olaola} &  ={\Vert L_{m-n}^{{\alpha}_{n}}\Vert^{2}}\int_{{\mathbb{B}%
}^{d}}P_{{\mathbf{k}}}^{n}(\boldsymbol{y})P_{{\mathbf{k}}^{\prime}}^{n}(\boldsymbol{y})w
_{\mu}(\boldsymbol{y})\mathrm{d}\boldsymbol{y}\, \delta_{m,m^{\prime}}\delta_{n,n^{\prime}},
\end{align*}
it is seen from the orthogonality of polynomials on the unit ball where $\alpha_{n}=n+\mu+{\beta+}\frac{d-1}{2}.$

For the reader's convenience, we also provide the definitions of the continuous Hahn polynomials and the beta function, which will be applied in the subsequent sections. The continuous Hahn polynomials is expressed in terms of the $_{3}F_{2}$ hypergeometric function as \cite{27}
\begin{equation}
p_{m}\left( x;\alpha,\beta,\gamma,\delta\right)
=i^{m}\frac{\left( \alpha+\gamma\right) _{m}\left(\alpha+\delta\right) _{m}}{m!} \hyper{3}{2}{-m,\ m+\alpha+\beta+\gamma+\delta-1,\ \alpha+ix}{\alpha+\gamma,\ \alpha+\delta}{1} \label{hahn}
\end{equation}
and the beta function is defined as \cite{26}
\begin{equation}
B\left(\alpha,\beta\right)  ={\displaystyle \int \limits_{0}^{1}}x^{\alpha-1}\left(
 1-x\right)  ^{\beta-1}dx=\frac{\Gamma \left(\alpha\right)\Gamma \left( \beta\right)  }{\Gamma \left(\alpha+\beta\right)  }, \qquad \Re\left( \alpha\right)  ,\Re\left( \beta\right)  >0. \label{beta}
\end{equation}

\subsection{Fourier transform of orthogonal polynomials on the ball}

The univariate Fourier transform of a function $g(x)$, which is both absolutely integrable and square-integrable, is defined as follows \cite[p.111, Eq. (7.1)]{3}
\begin{equation}
\mathcal{F}
\left(  g\left(  x\right)  \right) (\xi)=\int \limits_{-\infty}^{\infty}e^{-i\xi
x}g\left(  x\right)  dx \label{16}%
\end{equation}
and in the multivariate case with $d$ variables, the Fourier transform of a function $g(x_{1},\dots,x_{d})$, both absolutely integrable and square-integrable, is defined as follows \cite[p. 182, Eq. (11.1a)]{3}
\begin{equation}
\mathcal{F}%
\left(  g\left(  x_{1},\dots,x_{d}\right)  \right)(\xi_{1},\dots,\xi_{d})  =\int \limits_{-\infty
}^{\infty}\cdots \int \limits_{-\infty}^{\infty}e^{-i\left(  \xi_{1}x_{1}%
+\cdots +\xi_{d}x_{d}\right)  }g\left(  x_{1},\dots,x_{d}\right)  dx_{1}\cdots dx_{d}.
\label{17}%
\end{equation}

The corresponding Parseval's identity associated with equation (\ref{16}) can be stated as follows
\cite[p.118, Eq. (7.17)]{3}
\begin{equation}
\int \limits_{-\infty}^{\infty}g\left(  x\right)  \overline {h\left(  x\right)
}dx=\frac{1}{2\pi}\int \limits_{-\infty}^{\infty}\mathcal{F}
\left(  g\left(  x\right)  \right)  \overline{ \mathcal{F}\left(  h\left(  x\right)  \right)  }d\xi,\label{monoparseval}%
\end{equation}
and for the multivariate case with $d$ variables, the corresponding Parseval's identity associated with equation (\ref{17}) takes the form \cite[p. 183, (iv)]{3}
\begin{multline}
 \int \limits_{-\infty}^{\infty}\cdots \int \limits_{-\infty}^{\infty}g\left(
x_{1},\dots,x_{d}\right)  \overline{h\left(  x_{1},\dots,x_{d}\right)  }
dx_{1}\cdots dx_{d} \\
 =\frac{1}{\left(  2\pi \right)  ^{d}}\int \limits_{-\infty}^{\infty}
\cdots \int \limits_{-\infty}^{\infty}\mathcal{F}\left(  g\left(  x_{1},\dots,x_{d}\right)  \right)  \overline{
\mathcal{F}\left(  h\left(  x_{1},\dots,x_{d}\right)  \right)  }d\xi_{1}\cdots d\xi_{d}.\label{multi}
\end{multline}

To proceed, we first summarize the established results concerning the Fourier transform of orthogonal polynomials on the unit ball, as reported in \cite{233}. Assume that the function is represented via an expansion of orthogonal polynomials on the unit ball, given by
\begin{multline}
g_{d}\left( \boldsymbol{x};\mathbf{k},\alpha,\mu \right)  :=g_{d}\left(x_{1},\dots,x_{d};k_{1},\dots,k_{d},\alpha,\mu \right)  \\
 =\prod \limits_{j=1}^{d}\left(  1-\tanh^{2}x_{j}\right)  ^{\alpha+\frac{d-j}{4}}\ P_{\mathbf{k}}^{\mu}\left(  \vartheta_{1},\dots,\vartheta_{d}\right),\label{150}%
\end{multline}
for $d\geq1$, where $\alpha,\mu$ are real parameters and
\begin{align}
\vartheta_{1}\left(  x_{1}\right)   & =\vartheta_{1}=\tanh x_{1},\nonumber\\
\vartheta_{2}\left(  x_{1},x_{2}\right)   & =\vartheta_{2}=\tanh x_{2}\sqrt{\left(  1-\tanh^{2}x_{1}\right)},\nonumber\\
\vartheta_{3}\left(  x_{1},x_{2},x_{3}\right)   & =\vartheta_{3}=\tanh x_{3}\sqrt{\left(  1-\tanh^{2}x_{1}\right)\left(  1-\tanh^{2}x_{2}\right)},\nonumber\\
\vdots\nonumber\\
\vartheta_{d}\left(  x_{1},\dots,x_{d}\right)   & =\vartheta_{d}=\tanh x_{d}%
\sqrt{\left(  1-\tanh^{2}x_{1}\right)  \left(  1-\tanh^{2}x_{2}\right)
\cdots \left(  1-\tanh^{2}x_{d-1}\right)  }\label{var1},
\end{align}
for $d\geq1$.

The following lemma, as established in \cite{233}, provides the Fourier transform of
$ g_{d}\left( \boldsymbol{x};\mathbf{k},\alpha,\mu \right)$, defined in (\ref{150}), obtained through a recursive procedure.
\begin{lemma}\cite{233}
\label{prop:OPcone2}
An explicit formula for the Fourier transform of the function $g_{d}\left( \boldsymbol{x};\mathbf{k},\alpha,\mu \right)  $ is given by
\begin{multline}%
\mathcal{F} \left(  g_{d}\left( \boldsymbol{x};\mathbf{k},\alpha,\mu \right)  \right)  :=
\mathcal{F}
\left(  g_{d}\left(  x_{1},\dots,x_{d};k_{1},\dots,k_{d},\alpha,\mu \right)  \right) \\
=2^{2d\alpha+\frac{d\left(  d-5\right)  }{4}+\text{$
{\textstyle \sum \limits_{j=1}^{d-1}}
$}{jk}_{j+1}}\prod \limits_{j=1}^{d}\left \{  \frac{\left(  2\left(
\left \vert \mathbf{k}^{j+1}\right \vert +\mu+\frac{d-j}{2}\right)  \right)
_{k_{j}}}{k_{j}!}\varphi_{j}^{d}\left(  \alpha,\mu,\mathbf{k};\xi_{j}\right)
\right \}  ,  \label{18}%
\end{multline}
where%
\begin{multline*}
\varphi_{j}^{d}\left(  \alpha,\mu,\mathbf{k};\xi_{j}\right)  =B\left(\alpha+\frac{\left \vert \mathbf{k}^{j+1}\right \vert +i\xi_{j}}{2}+\frac{d-j}{4},\alpha+\frac{\left \vert \mathbf{k}^{j+1}\right \vert -i\xi_{j}}{2}+\frac{d-j}{4}\right)    \\
\times
\hyper{3}{2}{-k_{j},\ k_{j}+2\left(  \left \vert \mathbf{k}^{j+1}\right \vert +\mu+\frac{d-j}{2}\right)  ,\ \alpha+\frac{\left \vert \mathbf{k}^{j+1}\right \vert +i\xi_{j}}{2}+\frac{d-j}{4}}{\left \vert \mathbf{k}^{j+1}\right \vert +\mu+\frac{d-j+1}{2},\  \left \vert \mathbf{k}^{j+1}\right \vert +2\alpha+\frac{d-j}{2}}{1},
\end{multline*}
or alternatively, expressed through the continuous Hahn polynomials
\begin{multline*}
\varphi_{j}^{d}\left(  \alpha,\mu,\mathbf{k};\xi_{j}\right)  =\frac{k_{j}!}{i^{k_{j}}\left(  \left \vert \mathbf{k}^{j+1}\right \vert +\mu+\frac{d-j+1}{2}\right)  _{k_{j}}\left(  \left \vert \mathbf{k}^{j+1}\right \vert +2\alpha+\frac{d-j}{2}\right)  _{k_{j}}}   \\
\times B\left(  \alpha+\frac{\left \vert \mathbf{k}^{j+1}\right \vert +i\xi_{j}}{2}+\frac{d-j}{4},\alpha+\frac{\left \vert \mathbf{k}^{j+1}\right \vert -i\xi_{j}}{2}+\frac{d-j}{4}\right)    \\
\times p_{k_{j}}\left(  \frac{\xi_{j}}{2};\alpha+\frac{\left \vert \mathbf{k}^{j+1}\right \vert }{2}+\frac{d-j}{4},\mu-\alpha+\frac{\left \vert \mathbf{k}^{j+1}\right \vert +1}{2}+\frac{d-j}{4}\right.    \\
\left.  ,\mu-\alpha+\frac{\left \vert \mathbf{k}^{j+1}\right \vert +1}{2}+\frac{d-j}{4},\alpha+\frac{\left \vert \mathbf{k}^{j+1}\right \vert }{2}+\frac{d-j}{4}\right).
\end{multline*}
Here, the beta function $B\left(  a,b\right)$ and the continuous Hahn polynomials $p_{k}\left( x;a,b,c,d\right)$ are given by the definitions provided in (\ref{beta}) and (\ref{hahn}), respectively.
\end{lemma}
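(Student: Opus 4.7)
The plan is to reduce the multivariate Fourier transform to a product of one-dimensional transforms by exploiting the key identity
\[
1-\sum_{i=1}^{j-1}\vartheta_i^{2}=\prod_{i=1}^{j-1}(1-\tanh^{2}x_i),\qquad j=1,\dots,d,
\]
which follows by a direct induction from the recursive definition (\ref{var1}). As a consequence, each normalized argument $\vartheta_j/\sqrt{1-\Vert\boldsymbol{\vartheta}_{j-1}\Vert^{2}}$ collapses to $\tanh x_j$, and the radial factor $(1-\Vert\boldsymbol{\vartheta}_{j-1}\Vert^{2})^{k_j/2}$ appearing in the ball polynomial (\ref{P}) becomes $\prod_{i<j}(1-\tanh^{2}x_i)^{k_j/2}$. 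Interchanging the order of the resulting double product turns the exponent of each $(1-\tanh^{2}x_i)$ into $|\mathbf{k}^{i+1}|/2$, and merging with the outer weights $(1-\tanh^{2}x_j)^{\alpha+(d-j)/4}$ from (\ref{150}) yields the clean separation
\[
g_{d}(\boldsymbol{x};\mathbf{k},\alpha,\mu)=\prod_{j=1}^{d}(1-\tanh^{2}x_j)^{A_j}\,C_{k_j}^{(\lambda_j)}(\tanh x_j),\qquad A_j:=\alpha+\frac{d-j}{4}+\frac{|\mathbf{k}^{j+1}|}{2}.
\]
Thus the multivariate transform (\ref{17}) decouples into a product of $d$ one-dimensional Fourier integrals.

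Each one-dimensional factor is then handled by the change of variable $s=(1+\tanh x_j)/2$, under which $1-\tanh^{2}x_j=4s(1-s)$, $e^{-i\xi_j x_j}=(s/(1-s))^{-i\xi_j/2}$, and $dx_j=ds/(2s(1-s))$. Writing $C_{k_j}^{(\lambda_j)}(2s-1)$ as its terminating Gauss series from (\ref{hyper}), interchanging sum and integral, applying the beta integral (\ref{beta}), and using $B(a,b+n)/B(a,b)=(b)_n/(a+b)_n$ yields
\[
\int_{-\infty}^{\infty}e^{-i\xi_j x_j}(1-\tanh^{2}x_j)^{A_j}C_{k_j}^{(\lambda_j)}(\tanh x_j)\,dx_j=2^{2A_j-1}\frac{(2\lambda_j)_{k_j}}{k_j!}\,\varphi_{j}^{d}(\alpha,\mu,\mathbf{k};\xi_j),
\]
with the balanced ${}_{3}F_{2}(1)$ produced by the terminating sum matching the stated $\varphi_j^d$. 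Parameter matching is direct: the lower parameters $\lambda_j+\tfrac12$ and $2A_j$ are precisely $|\mathbf{k}^{j+1}|+\mu+(d-j+1)/2$ and $|\mathbf{k}^{j+1}|+2\alpha+(d-j)/2$, and the upper parameter $A_j+i\xi_j/2$ equals $\alpha+(|\mathbf{k}^{j+1}|+i\xi_j)/2+(d-j)/4$.

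Taking the product over $j$ and collecting the powers of $2$, one verifies the aggregate $\sum_{j=1}^{d}(2A_j-1)=2d\alpha+d(d-5)/4+\sum_{j=1}^{d-1}jk_{j+1}$ by using $\sum_{j=1}^{d}(d-j)=d(d-1)/2$ and the reindexing $\sum_{j=1}^{d}|\mathbf{k}^{j+1}|=\sum_{j=1}^{d-1}jk_{j+1}$, which is exactly the exponent of $2$ in (\ref{18}). The continuous Hahn form of $\varphi_j^d$ is then obtained by substituting into (\ref{hahn}) the four parameters $\alpha+|\mathbf{k}^{j+1}|/2+(d-j)/4$ (twice) and $\mu-\alpha+(|\mathbf{k}^{j+1}|+1)/2+(d-j)/4$ (twice); one checks that the two sums of the first parameter with each of the last two equal $\lambda_j+\tfrac12$ and $2A_j$, the total parameter sum minus one equals $2\lambda_j$, and the shifted first parameter equals $A_j+i\xi_j/2$, so the ${}_{3}F_{2}$ in the Hahn definition matches ours term by term and the prefactor in the second form of $\varphi_j^d$ exactly inverts the Hahn normalization.

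The main obstacle I foresee is bookkeeping rather than insight: the half-integer shifts $(d-j)/4$, $|\mathbf{k}^{j+1}|/2$, and $\tfrac12$ must be carried consistently between the ${}_{3}F_{2}$ and Hahn parameterizations, and the overall power of $2$ depends on the correct reindexing of $\sum_j|\mathbf{k}^{j+1}|$. The substantive content is the separation identity at the very beginning: it is what turns a genuinely multivariate Fourier transform into $d$ independent univariate integrals, after which everything is a terminating ${}_{2}F_{1}$ expansion followed by beta integrals.
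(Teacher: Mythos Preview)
Your proposal is correct and complete: the separation identity for $1-\Vert\boldsymbol{\vartheta}_{j-1}\Vert^{2}$, the decoupling into univariate integrals, the substitution $s=(1+\tanh x_j)/2$ followed by term-by-term beta integration of the Gegenbauer series, the parameter matching for the ${}_{3}F_{2}$, the exponent bookkeeping for the power of $2$, and the identification with the continuous Hahn polynomial are all carried out accurately.

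Note, however, that the present paper does not supply its own proof of this lemma at all: the result is quoted from \cite{233} (G\"uldo\u{g}an Lekesiz, Akta\c{s}, Area, \emph{Axioms} 2022), and no argument for it appears in Section~4. Your argument is in fact the standard one used in that reference, namely a recursive reduction to the one-variable Fourier--Gegenbauer integral, so there is no methodological difference to report---you have reconstructed the original proof rather than found an alternative route.
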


As an application of this Fourier transform and the Parseval's identity, the next lemma follows given in \cite{233}.

\begin{lemma}\cite{233}
Let $\mathbf{k}$ and $\mathbf{k}^{j}$ be defined as in \eqref{notation}, let
$\mathbf{\alpha=}\left(  \alpha_{1},\alpha_{2}\right)  $ and $\left \vert \mathbf{\alpha}%
\right \vert =\alpha_{1}+\alpha_{2}$. Then, we have
\begin{multline}
 \int \limits_{-\infty}^{\infty}\cdots \int \limits_{-\infty}^{\infty}%
\ D_{\boldsymbol{k}}\left(  i\boldsymbol{x};\alpha_{1},\alpha_{2}\right)
\ D_{\boldsymbol{k'}}\left(  -i\boldsymbol{x};\alpha_{2},\alpha_{1}\right)
\mathbf{dx}
 =\left(  2\pi \right)  ^{d}2^{-2d\left \vert \mathbf{\alpha}\right \vert
+d+1}h_{\mathbf{k}}^{\left(  \alpha_{1}+\alpha_{2}-\frac{1}{2}\right)  }\\
 \times \prod \limits_{j=1}^{d}\frac{\left(  k_{j}!\right)  ^{2}\Gamma \left(
\left \vert \mathbf{k}^{j+1}\right \vert +2\alpha_{1}+\frac{d-j}{2}\right)
\Gamma \left(  \left \vert \mathbf{k}^{j+1}\right \vert +2\alpha_{2}+\frac{d-j}%
{2}\right)  }{2^{2\left \vert \mathbf{k}^{j+1}\right \vert }\left(  \left(
2\left \vert \mathbf{k}^{j+1}\right \vert +2\left \vert \mathbf{\alpha}\right \vert
+d-j-1\right)  _{k_{j}}\right)  ^{2}}\delta_{k_{j},k_{j}'},%
\end{multline}
for $\alpha_{1},\alpha_{2}>0$ where $h_{\mathbf{k}}^{\left(  \alpha_{1}+\alpha_{2}-\frac{1}%
{2}\right)  }$ is given in \eqref{Norm} and%
\begin{multline}
 \ D_{\mathbf{k}}\left(  \boldsymbol{x};\alpha_{1},\alpha_{2}\right)
=\prod \limits_{j=1}^{d}\left \{  \Gamma \left(  \alpha_{1}+\frac{\left \vert
\mathbf{k}^{j+1}\right \vert -x_{j}}{2}+\frac{d-j}{4}\right)  \Gamma \left(
\alpha_{1}+\frac{\left \vert \mathbf{k}^{j+1}\right \vert +x_{j}}{2}+\frac{d-j}%
{4}\right) \right. \\
 \left.\times\hyper{3}{2}{-k_{j},\ k_{j}+2\left(  \left \vert \mathbf{k}%
^{j+1}\right \vert +\left \vert \mathbf{\alpha}\right \vert +\frac{d-j-1}{2}\right)
,\alpha_{1}+\frac{\left \vert \mathbf{k}^{j+1}\right \vert +x_{j}}{2}+\frac{d-j}%
{4}}{\left \vert \mathbf{k}^{j+1}\right \vert +\left \vert \mathbf{\alpha}\right \vert
+\frac{d-j}{2},\  \left \vert \mathbf{k}^{j+1}\right \vert +2\alpha_{1}+\frac{d-j}{2}}{1} \right \} \label{ball},
\end{multline}

or, expressed using the continuous Hahn polynomials \eqref{hahn}
\begin{multline}
\ D_{\mathbf{k}}\left(  \boldsymbol{x};\alpha_{1},\alpha_{2}\right)  =%
{\displaystyle \prod \limits_{j=1}^{d}}
\left \{  \dfrac{k_{j}!i^{-k_{j}}}{\left(  \left \vert \mathbf{k}^{j+1}%
\right \vert +2\alpha_{1}+\dfrac{d-j}{2}\right)  _{k_{j}}\left(  \left \vert
\mathbf{k}^{j+1}\right \vert +\left \vert \mathbf{\alpha}\right \vert +\dfrac{d-j}%
{2}\right)  _{k_{j}}}\right.  \\
\times \left.  \Gamma \left(  \alpha_{1}+\dfrac{\left \vert \mathbf{k}^{j+1}%
\right \vert -x_{j}}{2}+\dfrac{d-j}{4}\right)  \Gamma \left(  \alpha_{1}%
+\dfrac{\left \vert \mathbf{k}^{j+1}\right \vert +x_{j}}{2}+\dfrac{d-j}%
{4}\right)  \right.  \\
\times p_{k_{j}}\left(  -\dfrac{ix_{j}}{2};\alpha_{1}+\dfrac{\left \vert
\mathbf{k}^{j+1}\right \vert }{2}+\dfrac{d-j}{4},\alpha_{2}+\dfrac{\left \vert
\mathbf{k}^{j+1}\right \vert }{2}+\dfrac{d-j}{4}\right.  \\
 \left.  \left.  ,\alpha_{2}+\dfrac{\left \vert \mathbf{k}^{j+1}\right \vert
}{2}+\dfrac{d-j}{4},\alpha_{1}+\dfrac{\left \vert \mathbf{k}^{j+1}\right \vert }%
{2}+\dfrac{d-j}{4}\right)  \right \},
\end{multline}
for $d\geq1$.
\end{lemma}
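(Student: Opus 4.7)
The plan is to apply the multivariate Parseval identity \eqref{multi} to the pair of functions $g_{d}(\boldsymbol{x};\mathbf{k},\alpha_{1},\mu)$ and $g_{d}(\boldsymbol{x};\mathbf{k}',\alpha_{2},\mu)$ from \eqref{150}, choosing the common ball-parameter $\mu=\alpha_{1}+\alpha_{2}-\tfrac{1}{2}$. This choice is exactly what makes the left-hand side of Parseval collapse into the orthogonality relation on the ball, while Lemma \ref{prop:OPcone2} converts the right-hand side into an integral of products of $\varphi_{j}^{d}$ factors, which are essentially the building blocks of $D_{\mathbf{k}}$.

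To evaluate the left-hand side, I would perform the change of variable $\boldsymbol{x}\mapsto\boldsymbol{\vartheta}$ given by \eqref{var1}, which is a diffeomorphism $\mathbb{R}^{d}\to\mathbb{B}^{d}$. Since $\vartheta_{j}$ depends only on $x_{1},\ldots,x_{j}$, the Jacobian matrix is lower triangular, and a direct computation gives $|J|=\prod_{j=1}^{d}(1-\tanh^{2}x_{j})^{1+(d-j)/2}$; a short induction shows $1-\|\boldsymbol{\vartheta}\|^{2}=\prod_{j=1}^{d}(1-\tanh^{2}x_{j})$. Combining the prefactor $\prod_{j}(1-\tanh^{2}x_{j})^{\alpha_{1}+\alpha_{2}+(d-j)/2}$ arising from $g_{d}(\boldsymbol{x};\mathbf{k},\alpha_{1},\mu)\,g_{d}(\boldsymbol{x};\mathbf{k}',\alpha_{2},\mu)$ with $|J|^{-1}$ and $w_{\mu}(\boldsymbol{\vartheta})=\prod_{j}(1-\tanh^{2}x_{j})^{\alpha_{1}+\alpha_{2}-1}$, the integrand reduces cleanly to $w_{\mu}(\boldsymbol{\vartheta})\,P_{\mathbf{k}}^{\mu}(\boldsymbol{\vartheta})\,P_{\mathbf{k}'}^{\mu}(\boldsymbol{\vartheta})\,d\boldsymbol{\vartheta}$, and the orthogonality relation together with the norm formula \eqref{Norm} delivers $h_{\mathbf{k}}^{\mu}\,\delta_{\mathbf{k},\mathbf{k}'}$.

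For the right-hand side, Lemma \ref{prop:OPcone2} supplies $\mathcal{F}(g_{d}(\cdot;\mathbf{k},\alpha_{1},\mu))$ as an explicit product involving $\varphi_{j}^{d}(\alpha_{1},\mu,\mathbf{k};\xi_{j})$, and the conjugate $\overline{\mathcal{F}(g_{d}(\cdot;\mathbf{k}',\alpha_{2},\mu))}$ is obtained simply by sending $i\xi_{j}\mapsto-i\xi_{j}$ in $\varphi_{j}^{d}(\alpha_{2},\mu,\mathbf{k}';\xi_{j})$, because every $\xi_{j}$ is real. Rewriting each Beta function in $\varphi_{j}^{d}$ via \eqref{beta} as a ratio of Gamma functions pulls out a $\xi_{j}$-independent denominator $\Gamma(2\alpha+|\mathbf{k}^{j+1}|+(d-j)/2)$, and the remaining Gamma-times-$_{3}F_{2}$ factors match exactly the defining formulas for $D_{\mathbf{k}}(i\boldsymbol{\xi};\alpha_{1},\alpha_{2})$ and $D_{\mathbf{k}'}(-i\boldsymbol{\xi};\alpha_{2},\alpha_{1})$. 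Equating both sides of Parseval and solving for the target integral yields the claimed identity.

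The hard part is the bookkeeping of prefactors. One must track and combine the explicit constants $2^{2d\alpha_{1}+d(d-5)/4+\sum_{j}j k_{j+1}}$ and $2^{2d\alpha_{2}+d(d-5)/4+\sum_{j}j k'_{j+1}}$ coming from \eqref{18}, the two products of Pochhammer-over-factorial ratios $(2(|\mathbf{k}^{j+1}|+\mu+(d-j)/2))_{k_{j}}/k_{j}!$, the Gamma-denominators extracted from the Beta functions, and the factor $(2\pi)^{-d}$ arising from Parseval, and then verify that after simplification they reassemble into the product appearing on the right-hand side of the statement. The factorization of the Kronecker delta across $j$ is automatic from $\delta_{\mathbf{k},\mathbf{k}'}=\prod_{j=1}^{d}\delta_{k_{j},k_{j}'}$.
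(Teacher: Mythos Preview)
Your approach is correct and is precisely the one the paper indicates: this lemma is not proved in the present paper but is quoted from \cite{233}, and the single-sentence lead-in (``As an application of this Fourier transform and the Parseval's identity, the next lemma follows'') is exactly the strategy you implement---apply Parseval \eqref{multi} to $g_{d}(\cdot;\mathbf{k},\alpha_{1},\mu)$ and $g_{d}(\cdot;\mathbf{k}',\alpha_{2},\mu)$ with $\mu=\alpha_{1}+\alpha_{2}-\tfrac{1}{2}$, reduce the physical side to the ball orthogonality via the substitution \eqref{var1}, and identify the Fourier side with $D_{\mathbf{k}}$ through Lemma~\ref{prop:OPcone2}. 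Your Jacobian and weight computations are correct, and the remaining work is indeed just the constant bookkeeping you describe.
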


\section{The Fourier Transforms of Orthogonal Polynomials on the Paraboloid}\label{sec:ft}

This study focuses primarily on determining the Fourier transforms of the Laguerre and Jacobi polynomials defined on the paraboloid
${\mathbb{U}}^{d+1}$. These transforms are shown to be representable through continuous Hahn polynomials. Furthermore, the analysis leads to the introduction of new families of multivariate orthogonal functions expressed in terms of multivariate Hahn polynomials. The discussion proceeds in two stages: first, addressing the Jacobi case on the paraboloid, followed by the Laguerre case, both following an analogous structural framework. In each setting, the corresponding Fourier transform is derived, after which the new function classes emerge via the application of the Fourier transform together with Parseval's identity. Then, some contiguous relations for these function classes are given. The proofs of these results are provided in the following section.

\subsection{The Fourier transform of Jacobi polynomials on the paraboloid}
Let us focus on a function formulated using Jacobi polynomials on the paraboloid \eqref{eq:sQcone} as
\begin{multline}
h_{m,\mathbf{k}}\left( t, \boldsymbol{x};\alpha,\zeta,\eta,\beta,\gamma,\mu \right)    :=h_{m,k_{1},\dots,k_{d}}\left(
t,x_{1},\dots,x_{d};\alpha,\zeta,\eta,\beta,\gamma,\mu \right)  \\
 =\prod \limits_{j=1}^{d}\left(  1-\tanh^{2}x_{j}\right)  ^{\alpha+\frac{d-j}{4}%
}(1+\tanh t)^{\zeta}(1-\tanh t)^{\eta}{\mathsf{Q}}_{{\mathbf{k}},m}^{n}\left(  \varsigma_{1},\dots,\varsigma_{d},\varsigma_{d+1}\right),
\end{multline}
for $d\geq1$, where $\alpha,\zeta,\eta,\beta,\mu,\gamma$ real parameters and
\begin{align*}
\varsigma_{1}\left( t \right) & =\varsigma_{1}=\left( \frac{1+\tanh t}{2}\right) \\
\varsigma_{2}\left( t, x_{1}\right)   & =\varsigma_{2}={\left( \frac{1+\tanh t}{2}\right)}^{1/2}\vartheta_{1}\left( x_{1}\right) ,\\
\varsigma_{3}\left( t, x_{1},x_{2}\right)   & =\varsigma_{3}={\left( \frac{1+\tanh t}{2}\right)}^{1/2}\vartheta_{2}\left( x_{1},x_{2}\right),\\
\vdots\\
\varsigma_{d+1}\left( t, x_{1},\dots,x_{d}\right)   & =\varsigma_{d+1}={\left( \frac{1+\tanh t}{2}\right)}^{1/2}\vartheta_{d}\left( x_{1},\dots,x_{d}\right) ,
\end{align*}
for $d\geq1$ where $\vartheta_{1},...,\vartheta_{d}$ are defined by \eqref{var1}.
Explicitly, this can be expressed as
\begin{multline*}
h_{m,\mathbf{k}}\left( t, \boldsymbol{x};\alpha,\zeta,\eta,\beta,\gamma,\mu \right)  =\frac{1}{2^{\frac{\left \vert \mathbf{k}\right \vert}{2}}}(1+\tanh t)^{\zeta+{\left \vert \mathbf{k}\right \vert}/2}(1-\tanh t)^{\eta}\ P_{m-\left \vert \mathbf{k}\right \vert }^{(\left \vert \mathbf{k}\right \vert+\mu+\beta+\frac{d-1}{2},\gamma)}\left(-\tanh t\right)\\
\times \prod \limits_{j=1}^{d}\left(  1-\tanh^{2}x_{j}\right)  ^{\alpha+\frac{d-j}{4}} \prod \limits_{j=1}^{d-1}\left(  1-\tanh^{2}x_{j}\right)  ^\frac{k_{j+1}+\cdots+k_{d}}{2}
\prod \limits_{j=1}^{d}C_{k_{j}}^{\left(  \lambda_{j} \right)}\left(\tanh x_{j}\right)
\end{multline*}
where $\lambda_{j}=\mu+\left \vert \mathbf{k}^{j+1}\right \vert +\frac{d-j}{2}$, or in terms of the function  $g_{d}\left(x_{1},\dots,x_{d};k_{1},\dots,k_{d},\alpha,\mu \right)$ defined  in \eqref{150}
\begin{multline}
h_{m,\mathbf{k}}\left( t, \boldsymbol{x};\alpha,\zeta,\eta,\beta,\gamma,\mu \right) =\frac{1}{2^{{\left \vert \mathbf{k}\right \vert}/2}}(1+\tanh t)^{\zeta+{\left \vert \mathbf{k}\right \vert}/2}(1-\tanh t)^{\eta}\\ \times P_{m-\left \vert \mathbf{k}\right \vert }^{(\left \vert \mathbf{k}\right \vert+\mu+\beta+\frac{d-1}{2},\gamma)}\left(-\tanh t\right)
 g_{d}\left(
x_{1},\dots,x_{d};k_{1},\dots,k_{d},\alpha,\mu \right).\label{jac-spec-func}
\end{multline}

\medskip
Based on the notations introduced above, we proceed to present the Fourier transform of the function $h_{m,\mathbf{k}}\left( t, \boldsymbol{x};\alpha,\zeta,\eta,\beta,\gamma,\mu \right) $ defined by (\ref{jac-spec-func}).

\medskip
\begin{theorem}\label{theorem:33}
The explicit form of the Fourier transform of the function $h_{m,\mathbf{k}}\left( t, \boldsymbol{x};\alpha,\zeta,\eta,\beta,\gamma,\mu \right)$ is stated as follows
\begin{multline}%
\mathcal{F}
\left( h_{m,\mathbf{k}}\left( t, \boldsymbol{x};\alpha,\zeta,\eta,\beta,\gamma,\mu \right) \right)  :=%
\mathcal{F}
\left( h_{m,\mathbf{k}}\left( t, x_{1},\dots,x_{d};\alpha,\zeta,\eta,\beta,\gamma,\mu\right)  \right) \\
=2^{\zeta+\eta-1}\frac{ \left(  \left\vert \mathbf{k} \right\vert + \mu + \beta + \frac{d+1}{2} \right)_{m - \left\vert \mathbf{k} \right\vert} \Gamma \left( \zeta + \frac{\left\vert \mathbf{k} \right\vert }{2}- \frac{i\xi_{d+1}}{2} \right)\Gamma \left( \eta + \frac{i\xi_{d+1}}{2} \right)}{\left( m- \left\vert \mathbf{k} \right\vert \right)!\Gamma \left( \frac{\left\vert \mathbf{k} \right\vert}{2}+\zeta+\eta\right)}   \\
\times \Theta \left(  m,\mathbf{k},\zeta,\eta,\beta,\gamma,\mu,\xi_{d+1}\right)\mathcal{F} \left(  g_{d}\left( \boldsymbol{x};\mathbf{k},\alpha,\mu \right) \right)(\xi_{1},\dots,\xi_{d})  \\
=2^{\zeta+\eta-1+2\alpha d+\frac{d\left(  d-5\right)  }{4}+\text{$
{\textstyle \sum \limits_{j=1}^{d-1}}
$}{ jk}_{j+1}}\frac{ \left(  \left\vert \mathbf{k} \right\vert + \mu + \beta + \frac{d+1}{2} \right)_{m - \left\vert \mathbf{k} \right\vert} \Gamma \left( \zeta + \frac{\left\vert \mathbf{k} \right\vert }{2}- \frac{i\xi_{d+1}}{2} \right)\Gamma \left( \eta + \frac{i\xi_{d+1}}{2} \right)}{\left( m- \left\vert \mathbf{k} \right\vert \right)!\Gamma \left( \frac{\left\vert \mathbf{k} \right\vert}{2}+\zeta+\eta\right)}   \\
\times \prod \limits_{j=1}^{d}\left \{  \frac{\left(  2\left(
\left \vert \mathbf{k}^{j+1}\right \vert +\mu+\frac{d-j}{2}\right)  \right)
_{k_{j}}}{k_{j}!} \varphi_{j}^{d}\left(  \alpha,\mu,\mathbf{k};\xi_{j}\right)
\right \} \Theta \left(  m,\mathbf{k},\zeta,\eta,\beta,\gamma,\mu,\xi_{d+1}\right)  \label{Jac-Fourier}%
\end{multline}
where $\varphi_{j}^{d}\left( \alpha,\mu,\mathbf{k};\xi_{j}\right)$ is defined as in Lemma 2.1 and
\begin{align*}
\Theta\left(  m,\mathbf{k},\zeta,\eta,\beta,\gamma,\mu,\xi_{d+1}\right) = \hyper{3}{2}{-m+\left \vert \mathbf{k}\right \vert,\ m+\mu+\beta+\gamma+\frac{d+1}{2},\ \frac{\left \vert \mathbf{k}\right \vert}{2}+\zeta -\frac{i\xi_{d+1}}{2}}{\left \vert \mathbf{k}\right \vert
+\mu+\beta+\frac{d+1}{2},\ \frac{\left \vert \mathbf{k}\right \vert}{2}+\zeta+\eta}{1}.
\end{align*}
\end{theorem}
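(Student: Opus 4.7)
The plan is to exploit the tensor-product structure of $h_{m,\mathbf{k}}$. Looking at the formulation (\ref{jac-spec-func}), the function splits as a product of a function depending only on $t$ and the function $g_{d}(x_{1},\dots,x_{d};k_{1},\dots,k_{d},\alpha,\mu)$ depending only on $\boldsymbol{x}$. Consequently, the $(d+1)$-dimensional Fourier transform factors as a 1-dimensional Fourier transform in $t$ (with dual variable $\xi_{d+1}$) times a $d$-dimensional Fourier transform in $\boldsymbol{x}$. The $d$-dimensional factor is supplied directly by Lemma \ref{prop:OPcone2}, which accounts for the product $\prod_{j=1}^{d}\{\cdots \varphi_{j}^{d}(\alpha,\mu,\mathbf{k};\xi_{j})\}$ appearing in (\ref{Jac-Fourier}). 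All the real work therefore lies in the one-dimensional transform.

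For the $t$-part I would compute
\[
I(\xi_{d+1}) = \frac{1}{2^{|\mathbf{k}|/2}}\int_{-\infty}^{\infty} e^{-i\xi_{d+1}t}\,(1+\tanh t)^{\zeta+|\mathbf{k}|/2}(1-\tanh t)^{\eta}\, P_{m-|\mathbf{k}|}^{(|\mathbf{k}|+\mu+\beta+\frac{d-1}{2},\,\gamma)}(-\tanh t)\,dt
\]
via the substitution $u=\tanh t$, using $dt=du/(1-u^{2})$ and $e^{-i\xi_{d+1}t}=(1+u)^{-i\xi_{d+1}/2}(1-u)^{i\xi_{d+1}/2}$. This converts the integral to one over $[-1,1]$ with weight $(1+u)^{\zeta+|\mathbf{k}|/2-i\xi_{d+1}/2-1}(1-u)^{\eta+i\xi_{d+1}/2-1}$ against $P_{m-|\mathbf{k}|}^{(\cdots,\gamma)}(-u)$. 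A second substitution $v=(1-u)/2$ moves the Jacobi polynomial's argument to $2v-1$, so that the ${}_{2}F_{1}$ representation of the Jacobi polynomial in (\ref{200}) becomes a hypergeometric series in the clean variable $1-v$, while the measure becomes $v^{\eta+i\xi_{d+1}/2-1}(1-v)^{\zeta+|\mathbf{k}|/2-i\xi_{d+1}/2-1}\,dv$ on $[0,1]$ (with an overall $2^{\zeta+\eta+|\mathbf{k}|/2-1}$ from rescaling).

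Next I expand the ${}_{2}F_{1}$ as a power series in $1-v$ and integrate term-by-term using the beta identity (\ref{beta}): each integral
\[
\int_{0}^{1} v^{a-1}(1-v)^{b-1+k}\,dv = \frac{\Gamma(a)\Gamma(b)}{\Gamma(a+b)}\,\frac{(b)_{k}}{(a+b)_{k}},
\]
with $a=\eta+i\xi_{d+1}/2$ and $b=\zeta+|\mathbf{k}|/2-i\xi_{d+1}/2$, produces an extra Pochhammer pair that, when combined with the summand of the ${}_{2}F_{1}$, packages perfectly into a ${}_{3}F_{2}(\cdots;1)$. Identifying its parameters gives exactly the factor $\Theta(m,\mathbf{k},\zeta,\eta,\beta,\gamma,\mu,\xi_{d+1})$. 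The gamma prefactor $\Gamma(a)\Gamma(b)/\Gamma(a+b)$ yields the displayed $\Gamma(\zeta+|\mathbf{k}|/2-i\xi_{d+1}/2)\Gamma(\eta+i\xi_{d+1}/2)/\Gamma(|\mathbf{k}|/2+\zeta+\eta)$ ratio, and the Jacobi normalization $(|\mathbf{k}|+\mu+\beta+\frac{d+1}{2})_{m-|\mathbf{k}|}/(m-|\mathbf{k}|)!$ comes from the leading constant in (\ref{200}). Combined with the constant $2^{\zeta+\eta-1}$ (after absorbing the $2^{|\mathbf{k}|/2}$ denominator), this reproduces the first equality of (\ref{Jac-Fourier}); multiplying by the $\mathcal{F}(g_{d})$ factor from Lemma \ref{prop:OPcone2} gives the second.

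The main obstacle is purely bookkeeping: tracking the exponent shifts through the two substitutions (especially ensuring the sign convention on $-\tanh t$ is correctly inverted so that the ${}_{2}F_{1}$ variable becomes $1-v$ rather than $v$), and matching the parameters of the resulting ${}_{3}F_{2}$ to those in $\Theta$. All convergence issues (absolute integrability in $t$) are handled by assuming $\operatorname{Re}(\zeta+|\mathbf{k}|/2)>0$ and $\operatorname{Re}(\eta)>0$, which is the natural condition for the beta integrals to converge; the ${}_{3}F_{2}$ series terminates because $-(m-|\mathbf{k}|)$ is a non-positive integer, so no further convergence analysis is needed.
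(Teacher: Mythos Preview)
Your proposal is correct and follows essentially the same route as the paper's proof: factor the $(d+1)$-dimensional transform into the $t$-integral times $\mathcal{F}(g_d)$ from Lemma~\ref{prop:OPcone2}, substitute to reduce the $t$-integral to a Jacobi polynomial against a beta-type weight on $[0,1]$, expand the Jacobi polynomial via its ${}_{2}F_{1}$ representation, and integrate termwise using the beta integral to assemble the ${}_{3}F_{2}$ that defines $\Theta$. The only cosmetic difference is that the paper makes a single substitution $u=\tfrac{1+\tanh t}{2}$ (so the ${}_{2}F_{1}$ variable is $u$ directly), whereas you do it in two steps ($u=\tanh t$, then $v=(1-u)/2$, expanding in $1-v$); these are related by $u_{\text{paper}}=1-v_{\text{student}}$ and yield identical bookkeeping.
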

\subsection{Special function classes obtained via the Fourier transform of Jacobi polynomials on the paraboloid}

In this subsection, using Parseval's identity together with the Fourier transform of the Jacobi polynomials on the paraboloid, we construct a family of special functions.

\begin{theorem}\label{theorem:34}
Considering $\mathbf{k}$ and $\mathbf{k}^{j}$ as in (\ref{notation}), let
$\mathbf{\alpha=}\left(  \alpha_{1},\alpha_{2}\right)  $, $\left \vert \mathbf{\alpha}\right \vert
=\alpha_{1}+\alpha_{2},~\mathbf{\zeta=}\left( \zeta_{1},\zeta_{2}\right)  $, $\left \vert
\mathbf{\zeta}\right \vert =\zeta_{1}+\zeta_{2}$, $\mathbf{\eta=}\left(  \eta_{1},\eta_{2}\right)  $
and $\left \vert \mathbf{\eta}\right \vert =\eta_{1}+\eta_{2}.$ One obtains the following relation
\begin{multline*}
  \int \limits_{-\infty}^{\infty}\cdots \int \limits_{-\infty}^{\infty}%
\ \Gamma \left(  \eta_{1}+\frac{it}{2}\right)  \Gamma \left( \eta_{2}%
-\frac{it}{2}\right)A_{m,\mathbf{k}}^{\left(  d+1\right)
}\left(  it,i\boldsymbol{x};\alpha_{1},\alpha_{2},\zeta_{1},\zeta_{2},\eta_{1},\eta_{2}\right) \\ \times A_{m',\mathbf{k'}}^{\left(  d+1\right)  }\left(
\boldsymbol{-}it,-i\boldsymbol{x};\alpha_{2},\alpha_{1},\zeta_{2},\zeta_{1},\eta_{2},\eta_{1}\right)
d\boldsymbol{x}dt\\
  =\pi ^{d+1}2^{-2d\left \vert \mathbf{\alpha}\right \vert
+2d+3}h_{\mathbf{k}}^{\left(  \alpha_{1}+\alpha_{2}-\frac{1}{2}\right)  }\left(
m-\left \vert \mathbf{k}\right \vert \right)  !\\
  \times \frac{\Gamma \left(  m+\left \vert
\mathbf{\zeta}\right \vert \right)  \Gamma \left(  m-\left \vert \mathbf{k}%
\right \vert +\left \vert \mathbf{\eta}\right \vert \right)  \Gamma \left(
\frac{\left \vert \mathbf{k}\right \vert}{2} +\zeta_{1}+\eta_{1}\right)  \Gamma \left( \frac{ \left \vert
\mathbf{k}\right \vert}{2} +\zeta_{2}+\eta_{2}\right)  }{\left(  \left( \left \vert
\mathbf{k}\right \vert +\left \vert \mathbf{\zeta}\right \vert \right)
_{m-\left \vert \mathbf{k}\right \vert }\right)  ^{2}\left(  2m- \left \vert
\mathbf{k}\right \vert+\left \vert
\mathbf{\zeta}\right \vert +\left \vert \mathbf{\eta}\right \vert -1\right)
\Gamma \left(  m +\left \vert \mathbf{\zeta}%
\right \vert +\left \vert \mathbf{\eta}\right \vert -1\right)  }\delta_{m,m'}\\
  \times \prod \limits_{j=1}^{d}\frac{\left(  k_{j}!\right)  ^{2}\Gamma \left(
\left \vert \mathbf{k}^{j+1}\right \vert +2\alpha_{1}+\frac{d-j}{2}\right)
\Gamma \left(  \left \vert \mathbf{k}^{j+1}\right \vert +2\alpha_{2}+\frac{d-j}%
{2}\right)  }{2^{2\left \vert \mathbf{k}^{j+1}\right \vert }\left(  \left(
2\left \vert \mathbf{k}^{j+1}\right \vert +2\left \vert \mathbf{\alpha}\right \vert
+d-j-1\right)  _{k_{j}}\right)  ^{2}}\delta_{k_{j},k_{j}'}%
\end{multline*}
for assuming $\alpha_{1},\alpha_{2},\zeta_{1},\zeta_{2},\eta_{1},\eta_{2}>0$ where $h_{\mathbf{k}}^{\left(
\alpha_{1}+\alpha_{2}-\frac{1}{2}\right)  }$ is expressed as (\ref{Norm}) and%
\begin{multline*}
 A_{m,\mathbf{k}}^{\left(  d+1\right)  }\left(
t,\boldsymbol{x};\alpha_{1},\alpha_{2},\zeta_{1},\zeta_{2},\eta_{1},\eta_{2}\right)  \\
=\Gamma\left( \frac{\left \vert \mathbf{k}\right \vert}{2}+ \zeta_{1}-\frac{t}{2}\right)\hyper{3}{2}{-m+\left \vert \mathbf{k}\right \vert ,\ m+\left \vert \mathbf{\zeta}\right \vert +\left \vert
\mathbf{\eta}\right \vert -1,\frac{\left \vert \mathbf{k}\right \vert}{2} +\zeta_{1}-\frac{t}
{2}}{\left \vert \mathbf{k}\right \vert +\left \vert \mathbf{\zeta}\right \vert
,\frac{\left \vert \mathbf{k}\right \vert}{2} +\zeta_{1}+\eta_{1}}{1}D_{\mathbf{k}}\left(  \boldsymbol{x};\alpha_{1},\alpha_{2}\right)
\end{multline*}
equivalently, via Hahn polynomials (\ref{hahn})%
\begin{multline*}
\  A_{m,\mathbf{k}}^{\left(  d+1\right)  }\left(
t,\boldsymbol{x};\alpha_{1},\alpha_{2},\zeta_{1},\zeta_{2},\eta_{1},\eta_{2}\right)  \\
=\dfrac{\left(  m-\left \vert
\mathbf{k}\right \vert \right)  !i^{\left \vert \mathbf{k}\right \vert -m}\Gamma\left( \frac{\left \vert \mathbf{k}\right \vert}{2}+ \zeta_{1}-\frac{t}{2}\right)}{\left(
\left \vert \mathbf{k}\right \vert +\left \vert \mathbf{\zeta}\right \vert \right)
_{m-\left \vert \mathbf{k}\right \vert }\left( \frac{ \left \vert \mathbf{k}\right \vert}{2}
+\zeta_{1}+\eta_{1}\right)  _{m-\left \vert \mathbf{k}\right \vert }}
 p_{m-\left \vert \mathbf{k}\right \vert }\left(  \dfrac{it}%
{2};\frac{\left \vert \mathbf{k}\right \vert}{2} +\zeta_{1},\eta_{2},\frac{\left \vert \mathbf{k}%
\right \vert }{2}+\zeta_{2},\eta_{1}\right)D_{\mathbf{k}}\left(  \boldsymbol{x};\alpha_{1},\alpha_{2}\right),
\end{multline*}
for $d\geq1$ where $D_{\mathbf{k}}\left( \boldsymbol{x};\alpha_{1},\alpha_{2}\right)$ is given by \eqref{ball}.
\end{theorem}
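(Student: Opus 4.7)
The strategy is to apply the $(d+1)$-variable Parseval identity \eqref{multi} to two copies of the Jacobi-type function \eqref{jac-spec-func}, with the free parameters tuned so that Theorem \ref{theorem:33} delivers exactly the $A^{(d+1)}_{m,\mathbf{k}}$-blocks on the Fourier side while the orthogonality of Jacobi polynomials on the paraboloid resolves the physical-space integral on the other side. Concretely, I would fix $\mu,\beta,\gamma$ by
\[
\mu=\alpha_1+\alpha_2-\tfrac{1}{2},\qquad \mu+\beta+\tfrac{d+1}{2}=|\mathbf{\zeta}|,\qquad \gamma=|\mathbf{\eta}|-1.
\]
Under these identifications the lower parameters $|\mathbf{k}|+\mu+\beta+\tfrac{d+1}{2}$ and $|\mathbf{k}|/2+\zeta+\eta$ of $\Theta$ become $|\mathbf{k}|+|\mathbf{\zeta}|$ and $|\mathbf{k}|/2+\zeta_1+\eta_1$, and the second numerator parameter $m+\mu+\beta+\gamma+\tfrac{d+1}{2}$ becomes $m+|\mathbf{\zeta}|+|\mathbf{\eta}|-1$, matching exactly the ${}_3F_2$ defining $A^{(d+1)}_{m,\mathbf{k}}$. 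The choice $\mu=\alpha_1+\alpha_2-\tfrac{1}{2}$ is moreover precisely the one under which the ${}_3F_2$'s of $\varphi_j^d(\alpha_1,\mu,\mathbf{k};\xi_j)$ match the building blocks of $D_{\mathbf{k}}(i\boldsymbol{\xi};\alpha_1,\alpha_2)$ from Lemma 2.2.

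Setting $g:=h_{m,\mathbf{k}}(t,\boldsymbol{x};\alpha_1,\zeta_1,\eta_1,\beta,\gamma,\mu)$ and $\bar h:=\overline{h_{m',\mathbf{k'}}(t,\boldsymbol{x};\alpha_2,\zeta_2,\eta_2,\beta,\gamma,\mu)}$, I would then apply \eqref{multi}. On the physical side, the substitutions $u_j=\tanh x_j$ and $T=(1+\tanh t)/2$ (with Jacobians $1/(1-u_j^2)$ and $1/(2T(1-T))$), followed by the triangular change of variables $(u_1,\dots,u_d)\to(\vartheta_1,\dots,\vartheta_d)=:\boldsymbol{y}$ from \eqref{var1} using $\prod_{i\le j}(1-u_i^2)=1-\|\vartheta_j\|^2$, convert $\int g\,\bar h\,d\boldsymbol{x}\,dt$ into
\[
2^{|\mathbf{\zeta}|+|\mathbf{\eta}|-1}\int_0^1 T^{\beta}(1-T)^{\gamma}\int_{\mathbb{B}^d}(T-T\|\boldsymbol{y}\|^2)^{\mu-1/2}\,\mathsf{Q}_{\mathbf{k},m}^{n}\,\mathsf{Q}_{\mathbf{k'},m'}^{n'}\,d\boldsymbol{y}\,dT,
\]
which is precisely the orthogonality integral of Jacobi polynomials on the paraboloid. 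It decomposes by \eqref{ort-J} together with the ball orthogonality of $P_{\mathbf{k}}^n$ (which uses $w_{\mu}$ with $\mu=\alpha_1+\alpha_2-\tfrac{1}{2}$) into an explicit product of Gamma/Pochhammer factors times $\delta_{m,m'}\delta_{\mathbf{k},\mathbf{k'}}$; this is the source of the $(m-|\mathbf{k}|)!\,\Gamma(m+|\mathbf{\zeta}|)\Gamma(m-|\mathbf{k}|+|\mathbf{\eta}|)$ block together with the factor $h_{\mathbf{k}}^{(\alpha_1+\alpha_2-1/2)}$ on the right-hand side.

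On the Fourier side I would insert \eqref{Jac-Fourier} for $\mathcal F(g)$ and its conjugate for $\mathcal F(h)$. The product $\prod_{j=1}^d\varphi_j^d(\alpha_1,\mu,\mathbf{k};\xi_j)\,\overline{\varphi_j^d(\alpha_2,\mu,\mathbf{k'};\xi_j)}$ equals, under the identifications above, a ratio of Gamma factors times $D_{\mathbf{k}}(i\boldsymbol{\xi};\alpha_1,\alpha_2)\,D_{\mathbf{k'}}(-i\boldsymbol{\xi};\alpha_2,\alpha_1)$ by Lemma 2.2, while the factors $\Gamma(\zeta_i+|\mathbf{k}|/2\mp i\xi_{d+1}/2)\,\Theta(m,\mathbf{k},\zeta_i,\eta_i,\beta,\gamma,\mu,\xi_{d+1})$ coming from the two transforms reassemble into $A^{(d+1)}_{m,\mathbf{k}}(i\xi_{d+1},i\boldsymbol{\xi};\alpha_1,\alpha_2,\zeta_1,\zeta_2,\eta_1,\eta_2)$ and $A^{(d+1)}_{m',\mathbf{k'}}(-i\xi_{d+1},-i\boldsymbol{\xi};\alpha_2,\alpha_1,\zeta_2,\zeta_1,\eta_2,\eta_1)$; the remaining $\Gamma(\eta_1+i\xi_{d+1}/2)\,\Gamma(\eta_2-i\xi_{d+1}/2)$ stay as a weight on the integrand. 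Relabelling $(\xi_1,\dots,\xi_d,\xi_{d+1})\to(x_1,\dots,x_d,t)$ and equating both evaluations via \eqref{multi} produces the first form of the identity. The Hahn-polynomial form of $A^{(d+1)}_{m,\mathbf{k}}$ then follows immediately by reading \eqref{hahn} with $a=|\mathbf{k}|/2+\zeta_1$, $b=\eta_2$, $c=|\mathbf{k}|/2+\zeta_2$, $d=\eta_1$, $x=it/2$, and Hahn-degree $m-|\mathbf{k}|$.

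The main obstacle will be the bookkeeping: verifying that the cumulative Jacobian $\prod_{i=1}^{d-1}(1-u_i^2)^{(d-i)/2}$ of $(u_1,\dots,u_d)\to(\vartheta_1,\dots,\vartheta_d)$ together with the $\tanh$-Jacobians converts $\prod_{j=1}^d(1-u_j^2)^{\alpha_1+\alpha_2+(d-j)/2-1}$ exactly into the ball weight $(1-\|\boldsymbol{y}\|^2)^{\alpha_1+\alpha_2-1}$, and then assembling all of the $2^{\cdots}$ prefactors from the two Fourier transforms, the $(2\pi)^{-(d+1)}$ from Parseval, the orthogonality norm on the paraboloid, and the Pochhammer-to-Gamma conversions so that the explicit constant $\pi^{d+1}2^{-2d|\mathbf{\alpha}|+2d+3}h_{\mathbf{k}}^{(\alpha_1+\alpha_2-1/2)}$ is reproduced exactly.
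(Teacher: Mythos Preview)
Your proposal is correct and follows essentially the same route as the paper: apply Parseval's identity \eqref{multi} to a pair of functions of the form \eqref{jac-spec-func}, tune $\mu=\alpha_1+\alpha_2-\tfrac12$, $\beta=|\mathbf{\zeta}|-\mu-\tfrac{d+1}{2}$, $\gamma=|\mathbf{\eta}|-1$, and use Theorem~\ref{theorem:33} on the Fourier side together with the Jacobi/ball orthogonality on the physical side. The paper presents this as an induction on $d$, writing out only the base case $d=1$ in full and then stating that the general case follows ``analogously'' from \eqref{jac-spec-func}, \eqref{Jac-Fourier} and \eqref{multi}; your plan is simply the direct $(d+1)$-variable version of that same computation, with the change of variables and Lemma~2.2 invoked explicitly.
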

\subsection{Contiguous relations for the function $A_{m,\mathbf{k}}^{\left(d+1\right)}\left(t,\boldsymbol{x};\alpha_{1},\alpha_{2},\zeta_{1},\zeta_{2},\eta_{1},\eta_{2}\right)$}
We now give some contiguous relations for the function $A_{m,\mathbf{k}}^{\left(d+1\right)}\left(t,\boldsymbol{x};\alpha_{1},\alpha_{2},\zeta_{1},\zeta_{2},\eta_{1},\eta_{2}\right)$ in the following theorem.
\begin{theorem}\label{theorem:rec1}
The function $A_{m,\mathbf{k}}^{\left(  d+1\right)  }\left(  t,\boldsymbol{x}%
;\alpha_{1},\alpha_{2},\zeta_{1},\zeta_{2},\eta_{1},\eta_{2}\right)  $
satisfies the following relations
\begin{align*}
i) \ & \left(  m+\left \vert \zeta \right \vert +\left \vert \eta \right \vert -1\right)
A_{m,\mathbf{k}}^{\left(  d+1\right)  }\left(  t,\boldsymbol{x};\alpha_{1}%
,\alpha_{2},\zeta_{1},\zeta_{2},\eta_{1},\eta_{2}+1\right) \\
&  \ \ \ \quad +\left(  m-\left \vert \mathbf{k}\right \vert \right)  A_{m-1,\mathbf{k}%
}^{\left(  d+1\right)  }\left(  t,\boldsymbol{x};\alpha_{1},\alpha_{2},\zeta
_{1},\zeta_{2},\eta_{1},\eta_{2}+1\right) \\
&\ \ \ \ \ \ \ \ \ \ \  \ \ \ \quad  =\left(  2m-\left \vert \mathbf{k}\right \vert +\left \vert \zeta \right \vert
+\left \vert \eta \right \vert -1\right)  A_{m,\mathbf{k}}^{\left(  d+1\right)
}\left(  t,\boldsymbol{x};\alpha_{1},\alpha_{2},\zeta_{1},\zeta_{2},\eta_{1}%
,\eta_{2}\right)  ,
\end{align*}
\begin{align*}
ii) \ & \left(  \frac{\left \vert \mathbf{k}\right \vert }{2}+\zeta_{2}-\eta
_{1}\right)  A_{m,\mathbf{k}}^{\left(  d+1\right)  }\left(  t,\boldsymbol{x}%
;\alpha_{1},\alpha_{2},\zeta_{1},\zeta_{2}+1,\eta_{1}+1,\eta_{2}-2\right) \\
&\ \ \ \  \ \ \quad   +\left(  \frac{\left \vert \mathbf{k}\right \vert }{2}+\zeta_{1}+\eta
_{1}\right)  A_{m,\mathbf{k}}^{\left(  d+1\right)  }\left(  t,\boldsymbol{x}%
;\alpha_{1},\alpha_{2},\zeta_{1},\zeta_{2}+1,\eta_{1},\eta_{2}-1\right) \\
&\ \ \ \ \ \ \ \ \ \ \  \ \ \ \quad   =\left(  \left \vert \mathbf{k}\right \vert +\left \vert \zeta \right \vert
\right)  A_{m,\mathbf{k}}^{\left(  d+1\right)  }\left(  t,\boldsymbol{x}%
;\alpha_{1},\alpha_{2},\zeta_{1},\zeta_{2},\eta_{1}+1,\eta_{2}-1\right)  ,
\end{align*}
\begin{align*}
iii) \ & \left(  m-\left \vert \mathbf{k}\right \vert \right)  \left(  \frac{\left \vert
\mathbf{k}\right \vert }{2}+\zeta_{2}-\eta_{1}\right)  A_{m-1,\mathbf{k}%
}^{\left(  d+1\right)  }\left(  t,\boldsymbol{x};\alpha_{1},\alpha_{2},\zeta
_{1},\zeta_{2}+1,\eta_{1}+1,\eta_{2}-1\right) \\
& \ \ \ \ \ \ \ \quad  +\left(  \frac{\left \vert \mathbf{k}\right \vert }{2}+\zeta_{1}+\eta
_{1}\right)  \left(  m+\left \vert \zeta \right \vert \right)  A_{m,\mathbf{k}%
}^{\left(  d+1\right)  }\left(  t,\boldsymbol{x};\alpha_{1},\alpha_{2},\zeta
_{1},\zeta_{2}+1,\eta_{1},\eta_{2}-1\right) \\
&\ \ \ \ \ \ \ \ \ \ \  \ \ \ \quad   =\left(  \left \vert \mathbf{k}\right \vert +\left \vert \zeta \right \vert
\right)  \left(  -\frac{\left \vert \mathbf{k}\right \vert }{2}+\zeta_{1}%
+\eta_{1}+m\right)  A_{m,\mathbf{k}}^{\left(  d+1\right)  }\left(
t,\boldsymbol{x};\alpha_{1},\alpha_{2},\zeta_{1},\zeta_{2},\eta_{1}+1,\eta
_{2}-1\right)  ,
\end{align*}
\begin{align*}
iv) \ & \left(  \left \vert \mathbf{k}\right \vert -2m-\left \vert \zeta \right \vert
-\left \vert \eta \right \vert +1\right) \left(  \frac{\left \vert \mathbf{k}\right \vert }{2}+\zeta_{1}%
-\frac{t}{2}\right) A_{m-1,\mathbf{k}}^{\left(  d+1\right)
}\left(  t,\boldsymbol{x};\alpha_{1},\alpha_{2},\zeta_{1}+1,\zeta_{2},\eta
_{1},\eta_{2}+1\right) \\
&\ \ \ \ \ \ \ \ \ \quad   =\left(  \left \vert \mathbf{k}\right \vert +\left \vert \zeta \right \vert
\right)  \left(  \frac{\left \vert \mathbf{k}\right \vert }{2}+\zeta_{1}%
+\eta_{1}\right)  \left \{  A_{m,\mathbf{k}}^{\left(  d+1\right)  }\left(
t,\boldsymbol{x};\alpha_{1},\alpha_{2},\zeta_{1},\zeta_{2},\eta_{1},\eta
_{2}+1\right)  \right. \\
& \ \ \ \ \ \ \ \ \ \ \ \ \ \ \ \ \ \ \ \ \ \ \ \quad  -\left.  A_{m-1,\mathbf{k}}^{\left(  d+1\right)  }\left(  t,\boldsymbol{x}%
;\alpha_{1},\alpha_{2},\zeta_{1},\zeta_{2},\eta_{1},\eta_{2}+1\right)
\right \}  ,
\end{align*}
\begin{align*}
v) \ & \left(  m+\left \vert \zeta \right \vert +\left \vert \eta \right \vert -1\right)
\left(  m+\left \vert \zeta \right \vert \right)  A_{m,\mathbf{k}}^{\left(
d+1\right)  }\left(  t,\boldsymbol{x};\alpha_{1},\alpha_{2},\zeta_{1},\zeta
_{2}+1,\eta_{1},\eta_{2}\right) \\
&  \ \ \ \ \ \ \ \ \ \ \ \  \quad =\left(  -m+\left \vert \mathbf{k}\right \vert \right)  \left(  \left \vert
\mathbf{k}\right \vert -m-\left \vert \eta \right \vert +1\right)
A_{m-1,\mathbf{k}}^{\left(  d+1\right)  }\left(  t,\boldsymbol{x};\alpha
_{1},\alpha_{2},\zeta_{1},\zeta_{2}+1,\eta_{1},\eta_{2}\right) \\
&  \ \ \ \ \ \ \ \ \ \ \ \  \quad \quad \quad \quad +\left(  2m-\left \vert \mathbf{k}\right \vert +\left \vert \zeta \right \vert
+\left \vert \eta \right \vert -1\right)  \left(  \left \vert \mathbf{k}%
\right \vert +\left \vert \zeta \right \vert \right)  A_{m,\mathbf{k}}^{\left(
d+1\right)  }\left(  t,\boldsymbol{x};\alpha_{1},\alpha_{2},\zeta_{1},\zeta
_{2},\eta_{1},\eta_{2}\right)  ,
\end{align*}
\begin{align*}
vi) \ & \left(  m+\left \vert \zeta \right \vert \right)  A_{m,\mathbf{k}}^{\left(
d+1\right)  }\left(  t,\boldsymbol{x};\alpha_{1},\alpha_{2},\zeta_{1},\zeta
_{2}+1,\eta_{1},\eta_{2}-1\right) \\
& \ \ \ \ \ \ \ \ \ \  \quad+\left(  -m+\left \vert \mathbf{k}\right \vert \right)  A_{m-1,\mathbf{k}%
}^{\left(  d+1\right)  }\left(  t,\boldsymbol{x};\alpha_{1},\alpha_{2},\zeta
_{1},\zeta_{2}+1,\eta_{1},\eta_{2}\right) \\
&\ \ \ \ \ \ \ \ \ \  \quad  \quad  \quad  \quad =\left(  \left \vert \mathbf{k}\right \vert +\left \vert \zeta \right \vert
\right)  A_{m,\mathbf{k}}^{\left(  d+1\right)  }\left(  t,\boldsymbol{x}%
;\alpha_{1},\alpha_{2},\zeta_{1},\zeta_{2},\eta_{1},\eta_{2}\right)  ,
\end{align*}
\begin{align*}
vii) \ & A_{m,\mathbf{k}}^{\left(  d+1\right)  }\left(  t-2,\boldsymbol{x};\alpha
_{1},\alpha_{2},\zeta_{1},\zeta_{2},\eta_{1},\eta_{2}\right) \\
&  \ \ \ \ \ \ \quad=\left(  \frac{\left \vert \mathbf{k}\right \vert }{2}+\zeta_{1}-\frac{t}%
{2}\right)  A_{m,\mathbf{k}}^{\left(  d+1\right)  }\left(  t,\boldsymbol{x}%
;\alpha_{1},\alpha_{2},\zeta_{1},\zeta_{2},\eta_{1},\eta_{2}\right) \\
&  \ \ \ \ \ \ \ \ \  \quad \quad \quad \quad+\frac{\left(  -m+\left \vert \mathbf{k}\right \vert \right)  \left(
m+\left \vert \zeta \right \vert +\left \vert \eta \right \vert -1\right)  }{\left(
\left \vert \mathbf{k}\right \vert +\left \vert \zeta \right \vert \right)  \left(
\frac{\left \vert \mathbf{k}\right \vert }{2}+\zeta_{1}+\eta_{1}\right)
}A_{m-1,\mathbf{k}}^{\left(  d+1\right)  }\left(  t,\boldsymbol{x};\alpha
_{1},\alpha_{2},\zeta_{1}+1,\zeta_{2},\eta_{1},\eta_{2}+1\right)  .
\end{align*}

\end{theorem}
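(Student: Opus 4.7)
The plan is to observe that the factor $D_{\mathbf{k}}(\boldsymbol{x};\alpha_1,\alpha_2)$ is unaffected by every shift appearing in (i)--(vii), so it cancels out of each identity. The problem therefore reduces to proving the same seven relations for the one-variable function
$$H_m(t;\zeta_1,\zeta_2,\eta_1,\eta_2) := \Gamma\!\left(\tfrac{|\mathbf{k}|}{2}+\zeta_1-\tfrac{t}{2}\right)\hyper{3}{2}{-m+|\mathbf{k}|,\,m+|\zeta|+|\eta|-1,\,\tfrac{|\mathbf{k}|}{2}+\zeta_1-\tfrac{t}{2}}{|\mathbf{k}|+|\zeta|,\,\tfrac{|\mathbf{k}|}{2}+\zeta_1+\eta_1}{1}.$$
By the alternative expression in Theorem~\ref{theorem:34}, $H_m$ is, up to an explicit prefactor, the continuous Hahn polynomial $p_{m-|\mathbf{k}|}(it/2;a,b,c,d)$ with $a=\tfrac{|\mathbf{k}|}{2}+\zeta_1$, $b=\eta_2$, $c=\tfrac{|\mathbf{k}|}{2}+\zeta_2$, $d=\eta_1$, whose Askey-scheme parameter sum satisfies $a+b+c+d-1=|\mathbf{k}|+|\zeta|+|\eta|-1$.

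I would then classify the seven identities by their shift pattern. Relations (i), (v), (vi) only change $m$ and the $\eta_i$'s, so the Gamma prefactor is inert; they should follow from the classical contiguous relations of Rainville for ${}_3F_2(1)$ applied to the first two upper parameters, or equivalently from the three-term recurrence in its degree for the continuous Hahn polynomial $p_{m-|\mathbf{k}|}$. Relations (ii) and (iii) shift $\zeta_2$, $\eta_1$, $\eta_2$ only, and the Gamma factor is again inert; the required identities are parameter-contiguous relations among the Askey variables $b$ and $d$. Relations (iv) and (vii) are the only ones that move $\zeta_1$ (equivalently, in (vii), $t\mapsto t-2$, which produces $u\mapsto u+1$ where $u:=\tfrac{|\mathbf{k}|}{2}+\zeta_1-\tfrac{t}{2}$); there the Gamma prefactor contributes a ratio $\Gamma(u+1)/\Gamma(u)=u$ that I would absorb into the series using the identity $u\,(u+1)_{k-1}=(u)_k$, reducing the claim to a one-parameter shift of the third upper entry of the ${}_3F_2$. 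In each case the verification then amounts to matching coefficients of the summation index in the ${}_3F_2(1)$ series and checking a polynomial identity via elementary Pochhammer manipulations such as $(\lambda+1)_k=\tfrac{\lambda+k}{\lambda}(\lambda)_k$.

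The main obstacle is combinatorial rather than conceptual: every shift in one of $\zeta_i$ or $\eta_i$ simultaneously rebalances the second upper parameter $m+|\zeta|+|\eta|-1$ together with several Pochhammer blocks, so the numerical coefficients $(m+|\zeta|+|\eta|-1)$, $(\tfrac{|\mathbf{k}|}{2}+\zeta_2-\eta_1)$, $(\tfrac{|\mathbf{k}|}{2}+\zeta_1+\eta_1)$ appearing on the left-hand sides must combine exactly with the algebraic output of the shifts. I expect (iii) and (iv) to be the hardest: (iii) couples an $m$-shift with simultaneous changes in $\zeta_2$ and $\eta_1$, while (iv) is the unique identity combining a $\zeta_1$-shift (which also moves the Gamma prefactor) with an $m$-shift. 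My plan is therefore to establish (i), (ii), (v), (vi) first, then use them as stepping stones for (iii) and (iv), and finally to obtain (vii) by applying $\Gamma(u+1)=u\,\Gamma(u)$ together with (iv). A small table tracking how each upper and lower hypergeometric parameter transforms under every shift will be indispensable for controlling the bookkeeping.
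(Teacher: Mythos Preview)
Your reduction is exactly right: $D_{\mathbf{k}}(\boldsymbol{x};\alpha_1,\alpha_2)$ depends only on $\boldsymbol{x},\alpha_1,\alpha_2,\mathbf{k}$, none of which is shifted in (i)--(vii), so it factors out and the problem collapses to identities for the ${}_3F_2$ part (together with the Gamma prefactor). This is also the paper's approach.

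Where your plan diverges from the paper is in execution. The paper does not stage the identities or route them through the continuous Hahn interpretation. It simply writes down seven standard contiguous relations for ${}_3F_2(\alpha,\beta,\gamma;\delta,\varepsilon;z)$ (obtained via Zeilberger's algorithm, in the spirit of Fasenmyer and Rainville), then substitutes
\[
\alpha\to -m+|\mathbf{k}|,\quad \beta\to m+|\zeta|+|\eta|-1,\quad \gamma\to \tfrac{|\mathbf{k}|}{2}+\zeta_1-\tfrac{t}{2},\quad \delta\to |\mathbf{k}|+|\zeta|,\quad \varepsilon\to \tfrac{|\mathbf{k}|}{2}+\zeta_1+\eta_1,\quad z\to 1,
\]
and reads off (i)--(vii) one at a time. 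For instance, (i) comes directly from $\beta\,{}_3F_2(\alpha,\beta+1,\gamma;\delta,\varepsilon;z)-\alpha\,{}_3F_2(\alpha+1,\beta,\gamma;\delta,\varepsilon;z)=(\beta-\alpha)\,{}_3F_2(\alpha,\beta,\gamma;\delta,\varepsilon;z)$. There is no need to derive (iii) or (iv) from earlier ones, and no need to track the Hahn parameters $a,b,c,d$; each identity is a direct specialization of one ${}_3F_2$ contiguous relation. Your observation about the Gamma factor in (iv) and (vii) is correct and is implicitly absorbed in the paper's choice of which ${}_3F_2$ relation to specialize (the ones that shift $\gamma$). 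In short: your proposal is correct, but you are planning considerably more work than the paper actually does.
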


\subsection{The Fourier transform of Laguerre polynomials on the paraboloid}
Now, for the Laguerre case we proceed the same framework applied to the Jacobi case on the paraboloid. Let us focus on a function formulated using Laguerre polynomials on the paraboloid \eqref{eq:sQpara} as

\begin{multline}
h_{m,\mathbf{k}}\left( t,\boldsymbol{x};\alpha,\zeta,\beta,\mu \right)    :=h_{m,k_{1},\dots,k_{d}}\left(
t,x_{1},\dots,x_{d};\alpha,\zeta,\beta,\mu  \right)  \\
 =\prod \limits_{j=1}^{d}\left(  1-\tanh^{2}x_{j}\right)  ^{\alpha+\frac{d-j}{4}%
}{\mathsf{R}}_{{\mathbf{k}},m}^{n}\left(  \sigma_{1},\dots,\sigma_{d},\sigma_{d+1}\right)e^{-\frac{e^{t}}{2}+bt}
,\label{15}%
\end{multline}
for $d\geq1$, where $\alpha,\zeta,\beta,\mu$ are real parameters and
\begin{align*}
\sigma_{1}\left(  t\right)   & =\sigma_{1}=e^t,\\
\sigma_{2}\left( t, x_{1}\right)   & =\sigma_{2}=e^{t/2}\vartheta_{1}\left(  x_{1}\right) ,\\
\sigma_{3}\left( t, x_{1},x_{2}\right)   & =\sigma_{3}=e^{t/2}\vartheta_{2}\left(  x_{1},x_{2}\right) ,\\
\vdots\\
\sigma_{d+1}\left( t, x_{1},\dots,x_{d}\right)   & =\sigma_{d+1}=e^{t/2}\vartheta_{d}\left(  x_{1},\dots,x_{d}\right),
\end{align*}
for $d\geq1$ where $\vartheta_{1},...,\vartheta_{d}$ are defined by \eqref{var1}.
We may represent this function explicitly as
\begin{align*}
h_{m,\mathbf{k}}\left( t,\boldsymbol{x};\alpha,\zeta,\beta,\mu \right) & =\prod \limits_{j=1}^{d}\left(  1-\tanh^{2}x_{j}\right)  ^{\alpha+\frac{d-j}{4}%
}\ L_{m-\left \vert \mathbf{k}\right \vert }^{\left \vert \mathbf{k}\right \vert+\mu+\beta+\frac{d-1}{2}}\left(e^t\right) \\
&\times e^{-\frac{e^{t}}{2}+\zeta t+\frac{\left \vert \mathbf{k}\right \vert}{2} t}\prod \limits_{j=1}^{d-1}\left(  1-\tanh^{2}x_{j}\right)  ^\frac{k_{j+1}+\cdots +k_{d}}{2}
\prod \limits_{j=1}^{d}C_{k_{j}}^{\left(  \lambda_{j} \right)}\left(\tanh x_{j}\right)
\end{align*}
where $\lambda_{j}=\mu+\left \vert \mathbf{k}^{j+1}\right \vert +\frac{d-j}{2}$, equivalently
\begin{equation}
h_{m,\mathbf{k}}\left( t,\boldsymbol{x};\alpha,\zeta,\beta,\mu \right) \\
=e^{-\frac{e^{t}}{2}+\zeta t+\frac{\left \vert \mathbf{k}\right \vert}{2} t} L_{m-\left \vert \mathbf{k}\right \vert }^{\left \vert \mathbf{k}\right \vert+\mu+\beta+\frac{d-1}{2}}\left(e^t\right)g_{d}\left(
\boldsymbol{x};\mathbf{k},\alpha,\mu \right)\label{100}
\end{equation}
where $g_{d}\left(\boldsymbol{x};\mathbf{k},\alpha,\mu \right)$ is defined in (\ref{150}).

\begin{theorem}\label{theorem:31}
The explicit form of the Fourier transform of the function $h_{m,\mathbf{k}}\left( t,\boldsymbol{x};\alpha,\zeta,\beta,\mu \right)$ is stated as follows
\begin{multline}%
\mathcal{F}
\left( h_{m,\mathbf{k}}\left( t,\boldsymbol{x};\alpha,\zeta,\beta,\mu \right)   \right)  :=%
\mathcal{F}\left( h_{m,\mathbf{k}}\left( t,x_{1},\dots,x_{d};\alpha,\zeta,\beta,\mu \right) \right) \\
=2^{\zeta+\frac{\left \vert \mathbf{k}\right \vert}{2} -i\xi_{d+1}}\frac{ \left( \left\vert \mathbf{k} \right\vert + \mu + \beta + \frac{d+1}{2} \right)_{m - \left\vert \mathbf{k} \right\vert} \Gamma \left( \zeta+\frac{ \left\vert \mathbf{k} \right\vert}{2} - i \xi_{d+1} \right)}{\left( m - \left\vert \mathbf{k} \right\vert \right)!}   \\
\times\Lambda \left(  m,\mathbf{k},\zeta,\mu,\beta ,\xi_{d+1}\right) \mathcal{F} \left(  g_{d}\left( \boldsymbol{x};\mathbf{k},\alpha,\mu \right) \right)(\xi_{1},\dots,\xi_{d})\\
=2^{\zeta+\frac{\left \vert \mathbf{k}\right \vert}{2} -i\xi_{d+1}+2\alpha d+\frac{d\left(  d-5\right)  }{4}+\text{$
{\textstyle \sum \limits_{j=1}^{d-1}}
$}{ jk}_{j+1}}\frac{ \left( \left\vert \mathbf{k} \right\vert + \mu + \beta + \frac{d+1}{2} \right)_{m - \left\vert \mathbf{k} \right\vert} \Gamma \left( \zeta +\frac{ \left\vert \mathbf{k} \right\vert}{2} - i \xi_{d+1} \right)}{\left( m - \left\vert \mathbf{k} \right\vert \right)!}   \\
\times \prod \limits_{j=1}^{d}\left \{  \frac{\left(  2\left(
\left \vert \mathbf{k}^{j+1}\right \vert +\mu+\frac{d-j}{2}\right)  \right)
_{k_{j}}}{k_{j}!} \varphi_{j}^{d}\left(  \alpha,\mu,\mathbf{k};\xi_{j}\right)
\right \} \Lambda \left(  m,\mathbf{k},\zeta,\mu,\beta ,\xi_{d+1}\right)\label{18b}%
\end{multline}
where $\varphi_{j}^{d}\left(  \alpha,\mu,\mathbf{k};\xi_{j}\right)$ is expressed as in Lemma 2.1 and
\begin{align*}
\Lambda \left(  m,\mathbf{k},\zeta,\mu,\beta ,\xi_{d+1}\right) = \hyper{2}{1}{-m+\left \vert \mathbf{k}\right \vert ,\zeta+\frac{\left \vert \mathbf{k}\right \vert}{2} -i\xi_{d+1}}{\left \vert \mathbf{k}\right \vert +\mu+\beta+\frac{d+1}{2}}{2}.
\end{align*}
\end{theorem}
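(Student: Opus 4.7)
The plan is to exploit the product structure of $h_{m,\mathbf{k}}$ in \eqref{100}, which factors cleanly into a $t$-dependent piece and the function $g_{d}(\boldsymbol{x};\mathbf{k},\alpha,\mu)$. Since the kernel $e^{-i(\xi_{1}x_{1}+\cdots+\xi_{d}x_{d}+\xi_{d+1}t)}$ in the $(d+1)$-dimensional Fourier transform \eqref{17} factors in the same way, the integral separates as
\[
\mathcal{F}\bigl(h_{m,\mathbf{k}}\bigr)(\xi_{1},\dots,\xi_{d+1})
=\mathcal{I}(\xi_{d+1})\cdot \mathcal{F}\bigl(g_{d}(\boldsymbol{x};\mathbf{k},\alpha,\mu)\bigr)(\xi_{1},\dots,\xi_{d}),
\]
where $\mathcal{I}(\xi_{d+1})$ denotes the one-dimensional Fourier transform of $e^{-e^{t}/2+\zeta t+|\mathbf{k}|t/2}L_{m-|\mathbf{k}|}^{|\mathbf{k}|+\mu+\beta+(d-1)/2}(e^{t})$. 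The $\boldsymbol{x}$-factor is immediately handled by Lemma~\ref{prop:OPcone2}, which produces precisely the displayed product over $j=1,\dots,d$ involving the $\varphi_{j}^{d}$ symbols and the prefactor $2^{2\alpha d+d(d-5)/4+\sum j k_{j+1}}$.

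The main computational step is therefore the evaluation of $\mathcal{I}(\xi_{d+1})$. I would perform the change of variable $u=e^{t}$, $du=u\,dt$, which converts the integral into
\[
\mathcal{I}(\xi_{d+1})=\int_{0}^{\infty}u^{\zeta+|\mathbf{k}|/2-i\xi_{d+1}-1}e^{-u/2}L_{m-|\mathbf{k}|}^{\alpha_{|\mathbf{k}|}}(u)\,du,
\]
with $\alpha_{|\mathbf{k}|}=|\mathbf{k}|+\mu+\beta+(d-1)/2$. Next I would insert the explicit series \eqref{102} for $L_{m-|\mathbf{k}|}^{\alpha_{|\mathbf{k}|}}$, interchange the (finite) sum with the integral, and use the elementary Gamma-function identity $\int_{0}^{\infty}u^{s-1}e^{-u/2}du=2^{s}\Gamma(s)$ term by term. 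Collecting the factor $2^{\zeta+|\mathbf{k}|/2-i\xi_{d+1}}\Gamma(\zeta+|\mathbf{k}|/2-i\xi_{d+1})$ outside, the shifted Gamma values combine via $\Gamma(s+k)/\Gamma(s)=(s)_{k}$ into the Pochhammer symbol $(\zeta+|\mathbf{k}|/2-i\xi_{d+1})_{k}$, and the remaining sum is recognized as the $_{2}F_{1}$ series
\[
\Lambda(m,\mathbf{k},\zeta,\mu,\beta,\xi_{d+1})=\hyper{2}{1}{-m+|\mathbf{k}|,\;\zeta+|\mathbf{k}|/2-i\xi_{d+1}}{|\mathbf{k}|+\mu+\beta+(d+1)/2}{2}.
\]
The leading coefficient $(\alpha_{|\mathbf{k}|}+1)_{m-|\mathbf{k}|}/(m-|\mathbf{k}|)!$ from \eqref{102} becomes the advertised $(|\mathbf{k}|+\mu+\beta+(d+1)/2)_{m-|\mathbf{k}|}/(m-|\mathbf{k}|)!$.

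Multiplying $\mathcal{I}(\xi_{d+1})$ by the expression for $\mathcal{F}(g_{d})$ from Lemma~\ref{prop:OPcone2} and merging the two powers of $2$ yields the first displayed identity in \eqref{18b}; substituting in the explicit form of $\mathcal{F}(g_{d})$ yields the second. The only subtleties I anticipate are verifying the absolute convergence needed to justify the swap of sum and integral (which follows since the Laguerre sum is finite and $\Re(\zeta+|\mathbf{k}|/2)>0$ guarantees the integrand is integrable at $0$, while the factor $e^{-u/2}$ controls infinity) and the routine but careful bookkeeping of Pochhammer shifts. No new identities beyond Lemma~\ref{prop:OPcone2} and the Gamma integral are required, so I expect this to be the shortest proof among the six results in Section~\ref{sec:ft}.
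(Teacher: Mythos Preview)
Your proposal is correct and follows essentially the same approach as the paper: factor $h_{m,\mathbf{k}}$ via \eqref{100}, substitute $u=e^{t}$ in the $t$-integral, expand the Laguerre polynomial using \eqref{102}, evaluate each term with the Gamma integral $\int_{0}^{\infty}u^{s-1}e^{-u/2}\,du=2^{s}\Gamma(s)$, recognize the resulting sum as the ${}_{2}F_{1}$ defining $\Lambda$, and then invoke Lemma~\ref{prop:OPcone2} for the $g_{d}$ factor. Your convergence remark is a small addition beyond what the paper writes, but the logical structure is identical.
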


\subsection{Special function classes obtained via the Fourier transform of Laguerre polynomials on the paraboloid.}

We now proceed by applying Parseval's identity together with the Fourier transform of the Laguerre polynomials on the paraboloid, which leads us to a new class of special functions.

\begin{theorem}\label{theorem:32}
Denote $\mathbf{k}$ and $\mathbf{k}^{j}$ as in (\ref{notation}). Let
$\mathbf{\alpha=}\left(  \alpha_{1},\alpha_{2}\right)  $, $\left \vert \mathbf{\alpha}\right \vert
=\alpha_{1}+\alpha_{2},~\mathbf{\zeta=}\left(  \zeta_{1},\zeta_{2}\right)  $ and $\left \vert
\mathbf{\zeta}\right \vert =\zeta_{1}+\zeta_{2}$. The equality below holds

\begin{multline*}
  \int \limits_{-\infty}^{\infty}\cdots \int \limits_{-\infty}^{\infty}%
\ B_{m,\mathbf{k}}^{\left(  d+1\right)  }\left( it, i\boldsymbol{x,};\alpha_{1},\alpha_{2},\zeta_{1},\zeta_{2}\right) \ B_{m',\mathbf{k'}}^{\left(  d+1\right)
}\left( -it, -i\boldsymbol{x};\alpha_{2},\alpha_{1},\zeta_{2},\zeta_{1}\right)  d\boldsymbol{x}dt\\
  =\left(  2\pi \right)  ^{d+1}2^{-2d\left \vert \mathbf{\alpha}\right \vert
-\left \vert \mathbf{k}\right \vert -\left \vert \mathbf{\zeta}\right \vert
+d+1}h_{\mathbf{k}}^{\left(  \alpha_{1}+\alpha_{2}-\frac{1}{2}\right)}
\frac{\Gamma \left(  \left \vert \mathbf{\zeta}\right \vert +m \right) \left(  m-\left \vert \mathbf{k}\right \vert \right)
!}{(\left(  \left \vert \mathbf{k}\right \vert +\left \vert \mathbf{\zeta}\right \vert
\right)  _{m-\left \vert \mathbf{k}\right \vert })^{2}}\delta_{m,m'}\\
\times \prod \limits_{j=1}^{d}\frac{\left(  k_{j}!\right)  ^{2}\Gamma \left(
\left \vert \mathbf{k}^{j+1}\right \vert +2\alpha_{1}+\frac{d-j}{2}\right)
\Gamma \left(  \left \vert \mathbf{k}^{j+1}\right \vert +2\alpha_{2}+\frac{d-j}%
{2}\right)  }{2^{2\left \vert \mathbf{k}^{j+1}\right \vert }\left(  \left(
2\left \vert \mathbf{k}^{j+1}\right \vert +2\left \vert \mathbf{\alpha}\right \vert
+d-j-1\right)  _{k_{j}}\right)  ^{2}}\delta_{k_{j},k_{j}'}%
\end{multline*}
for $\alpha_{1},\alpha_{2},\zeta_{1},\zeta_{2}>0$ where 
\begin{align*}
\ B_{m,\mathbf{k}}^{\left(  d+1\right)  }\left( t, \boldsymbol{x}%
;\alpha_{1},\alpha_{2},\zeta_{1},\zeta_{2}\right) =\Gamma \left(  \zeta_{1}+\frac{\left \vert
\mathbf{k}\right \vert}{2}-t \right)D_{\mathbf{k}}\left( \boldsymbol{x};\alpha_{1},\alpha_{2}\right)\\
\times \hyper{2}{1}{-m+\left \vert \mathbf{k}\right \vert,\ \zeta_{1}+\frac{\left \vert
\mathbf{k}\right \vert}{2} -t}{\left \vert \mathbf{k}\right \vert +\left \vert
\mathbf{\zeta}\right \vert }{2}
\end{align*}
for $d\geq1$, $h_{\mathbf{k}}^{\left(  \alpha_{1}+\alpha_{2}-\frac{1}{2}\right) }$ and $D_{\mathbf{k}}\left( \boldsymbol{x};\alpha_{1},\alpha_{2}\right)$ are given by (\ref{Norm}) and \eqref{ball},respectively.
\end{theorem}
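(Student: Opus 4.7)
My plan is to derive the identity by applying the $(d+1)$-variable Parseval relation \eqref{multi} to the pair $h_{m,\mathbf{k}}(t,\boldsymbol{x};\alpha_{1},\zeta_{1},\beta,\mu)$ and $h_{m',\mathbf{k}'}(t,\boldsymbol{x};\alpha_{2},\zeta_{2},\beta,\mu)$, where the auxiliary indices are fixed by $\mu=\alpha_{1}+\alpha_{2}-\tfrac{1}{2}$ and $\mu+\beta+\tfrac{d+1}{2}=\left|\mathbf{\zeta}\right|$. These two matchings are precisely what is needed in order to turn the physical-space integral into a product of a Laguerre and a ball orthogonality, and to bring the hypergeometric parameter of $\Lambda$ from Theorem \ref{theorem:31} into agreement with the $_{2}F_{1}$ defining $B_{m,\mathbf{k}}^{(d+1)}$.

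On the physical side of Parseval, I use the factorization \eqref{100}, which writes $h_{m,\mathbf{k}}$ as a product of a Laguerre factor in $e^{t}$ with the function $g_{d}(\boldsymbol{x};\mathbf{k},\alpha,\mu)$. After substituting $u=e^{t}$, the $t$-integral becomes $\int_{0}^{\infty}e^{-u}u^{\alpha_{\mathbf{k}}}L_{m-|\mathbf{k}|}^{\alpha_{\mathbf{k}}}(u)L_{m'-|\mathbf{k}'|}^{\alpha_{\mathbf{k}'}}(u)\,du$, where $\alpha_{\mathbf{k}}=|\mathbf{k}|+\mu+\beta+\tfrac{d-1}{2}=\left|\mathbf{\zeta}\right|+|\mathbf{k}|-1$. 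Laguerre orthogonality \eqref{ort-L} forces $|\mathbf{k}|=|\mathbf{k}'|$ and $m=m'$, and yields the factor $(m-|\mathbf{k}|)!^{-1}\Gamma(\left|\mathbf{\zeta}\right|+m)\,\delta_{m,m'}$. The remaining integral $\int g_{d}(\boldsymbol{x};\mathbf{k},\alpha_{1},\mu)g_{d}(\boldsymbol{x};\mathbf{k}',\alpha_{2},\mu)\,d\boldsymbol{x}$, under the change of variables $\boldsymbol{y}=(\vartheta_{1},\ldots,\vartheta_{d})$ from \eqref{var1}, converts into an integral on $\mathbb{B}^{d}$ in which the combined $(1-\tanh^{2}x_{j})^{\alpha_{1}+\alpha_{2}+(d-j)/2}$ weights together with the Jacobian reproduce the ball weight $w_{\mu}$ exactly because $\mu=\alpha_{1}+\alpha_{2}-\tfrac{1}{2}$; orthogonality of the basis \eqref{P} then collapses it to $h_{\mathbf{k}}^{\mu}\,\delta_{\mathbf{k},\mathbf{k}'}$ up to an explicit product of powers of $2$.

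On the Fourier side, Theorem \ref{theorem:31} expresses $\mathcal{F}(h_{m,\mathbf{k}})(\boldsymbol{\xi},\xi_{d+1})$ as a fully factorized product of an explicit constant, the Gamma factor $\Gamma(\zeta+\tfrac{|\mathbf{k}|}{2}-i\xi_{d+1})$, the hypergeometric $\Lambda$, and $\mathcal{F}(g_{d})(\boldsymbol{\xi})$. Under the parameter relation chosen above, the third parameter of $\Lambda$ becomes $|\mathbf{k}|+\left|\mathbf{\zeta}\right|$, so $\Lambda$ coincides with the $_{2}F_{1}$ appearing in the definition of $B_{m,\mathbf{k}}^{(d+1)}$. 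For the $\boldsymbol{\xi}$-dependence I compare the formula for $\mathcal{F}(g_{d})$ from Lemma \ref{prop:OPcone2} against the $_{3}F_{2}$ representation of $D_{\mathbf{k}}$ in \eqref{ball}: with $\alpha=\alpha_{1}$ and $\mu=\alpha_{1}+\alpha_{2}-\tfrac{1}{2}$, all upper and lower parameters line up, and the Beta factor in $\varphi_{j}^{d}$ supplies exactly the two Gamma factors in $D_{\mathbf{k}}(i\boldsymbol{\xi};\alpha_{1},\alpha_{2})$ modulo the denominator $\Gamma(|\mathbf{k}^{j+1}|+2\alpha_{1}+\tfrac{d-j}{2})$. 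Hence $\mathcal{F}(h_{m,\mathbf{k}})(\boldsymbol{\xi},\xi_{d+1})$ is proportional to $B_{m,\mathbf{k}}^{(d+1)}(i\xi_{d+1},i\boldsymbol{\xi};\alpha_{1},\alpha_{2},\zeta_{1},\zeta_{2})$ with an explicit constant; the analogous identification for $\overline{\mathcal{F}(h_{m',\mathbf{k}'})}$ produces $B_{m',\mathbf{k}'}^{(d+1)}(-i\xi_{d+1},-i\boldsymbol{\xi};\alpha_{2},\alpha_{1},\zeta_{2},\zeta_{1})$, since complex conjugation flips the signs of $\xi_{d+1}$ and $\xi_{j}$ inside the Gamma arguments.

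Substituting these two identifications into Parseval, relabelling $\xi_{d+1}\mapsto t$ and $\boldsymbol{\xi}\mapsto\boldsymbol{x}$, and solving for the integral of $B_{m,\mathbf{k}}^{(d+1)}(it,i\boldsymbol{x};\cdot)B_{m',\mathbf{k}'}^{(d+1)}(-it,-i\boldsymbol{x};\cdot)$ produces the claimed identity: the $(2\pi)^{d+1}$ is the Parseval normalization, $h_{\mathbf{k}}^{\mu}$ and $\Gamma(\left|\mathbf{\zeta}\right|+m)$ come from the physical side, and the denominator $\bigl((|\mathbf{k}|+\left|\mathbf{\zeta}\right|)_{m-|\mathbf{k}|}\bigr)^{2}$ is the square of the Pochhammer prefactor attached to the Fourier transform of $h_{m,\mathbf{k}}$ in Theorem \ref{theorem:31}. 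The main obstacle is the bookkeeping: reconciling the accumulated powers of $2$ (from the Fourier transform prefactor, from the $2$ appearing in the argument of the $_{2}F_{1}$'s, and from the ball constants $2^{2|\mathbf{k}^{j+1}|}$), and verifying that the product over $j$ on the stated right-hand side reproduces exactly the product arising from identifying $\mathcal{F}(g_{d})(\boldsymbol{\xi})\,\overline{\mathcal{F}(g_{d})(\boldsymbol{\xi})}$ with $D_{\mathbf{k}}(i\boldsymbol{\xi};\alpha_{1},\alpha_{2})D_{\mathbf{k}}(-i\boldsymbol{\xi};\alpha_{2},\alpha_{1})$ up to constants.
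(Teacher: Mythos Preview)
Your proposal is correct and follows essentially the same route as the paper: apply Parseval's identity \eqref{multi} to the pair $h_{m,\mathbf{k}}(\,\cdot\,;\alpha_{1},\zeta_{1},\beta,\mu)$ and $h_{m',\mathbf{k}'}(\,\cdot\,;\alpha_{2},\zeta_{2},\beta,\mu)$, fix $\mu=\alpha_{1}+\alpha_{2}-\tfrac{1}{2}$ and $\beta=\left|\mathbf{\zeta}\right|-\mu-\tfrac{d+1}{2}$ so that both the ball weight and the Laguerre parameter are tuned to the available orthogonalities, and then identify the Fourier-side integrand with the product of $B$-functions via Theorem~\ref{theorem:31} and Lemma~\ref{prop:OPcone2}. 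The paper presents this by writing out the case $d=1$ in full detail and then invoking the general argument, whereas you work directly in $d+1$ variables; the content is the same.

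One small point of logical order: Laguerre orthogonality \eqref{ort-L} cannot by itself ``force $|\mathbf{k}|=|\mathbf{k}'|$'', because when $|\mathbf{k}|\neq|\mathbf{k}'|$ the two Laguerre polynomials carry different superscripts and the weight exponent $\zeta_{1}+\zeta_{2}+\tfrac{|\mathbf{k}|+|\mathbf{k}'|}{2}-1$ matches neither. The factorization of the physical-side integral lets you evaluate the $\boldsymbol{x}$-integral first, and it is the ball orthogonality of the $P_{\mathbf{k}}^{\mu}$ that yields $\mathbf{k}=\mathbf{k}'$; only then does the $t$-integral become a genuine Laguerre orthogonality integral giving $m=m'$. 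This is exactly how the paper organizes the $d=1$ computation, and it does not affect the validity of your argument, only the phrasing.
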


\subsection{Contiguous relations for the function $B_{m,\mathbf{k}}^{\left(  d+1\right)  }\left( t, \boldsymbol{x}%
;\alpha_{1},\alpha_{2},\zeta_{1},\zeta_{2}\right) $}

Here, we present some contiguous relations for $B_{m,\mathbf{k}}^{\left(  d+1\right)  }\left(  t,\boldsymbol{x}%
;\alpha_{1},\alpha_{2},\zeta_{1},\zeta_{2}\right)  $.
\begin{theorem}\label{theorem:rec2}
The function $B_{m,\mathbf{k}}^{\left(  d+1\right)  }\left(  t,\boldsymbol{x}%
;\alpha_{1},\alpha_{2},\zeta_{1},\zeta_{2}\right)  $ satisfies the following relations
\begin{align*}
i) \ &  \left(  \left \vert \mathbf{k}\right \vert +2\zeta_{2}+2t\right)
B_{m,\mathbf{k}}^{\left(  d+1\right)  }\left(  t+1,\boldsymbol{x};\alpha
_{1},\alpha_{2},\zeta_{1}+1,\zeta_{2}\right) \\
& \ \ \ \ \ \ \ \ \ \ \  \ \ \ \quad=\left(  \left \vert \mathbf{k}\right \vert +\left \vert \zeta \right \vert
\right)  \left \{  B_{m,\mathbf{k}}^{\left(  d+1\right)  }\left(
t,\boldsymbol{x};\alpha_{1},\alpha_{2},\zeta_{1},\zeta_{2}\right)
+B_{m+1,\mathbf{k}}^{\left(  d+1\right)  }\left(  t,\boldsymbol{x};\alpha
_{1},\alpha_{2},\zeta_{1},\zeta_{2}\right)  \right \}  ,
\end{align*}
\begin{align*}
ii) \ & 2\left(  \left \vert \zeta \right \vert +m\right)  B_{m,\mathbf{k}}^{\left(
d+1\right)  }\left(  t,\boldsymbol{x};\alpha_{1},\alpha_{2},\zeta_{1},\zeta
_{2}+1\right)  -\left(  \left \vert \mathbf{k}\right \vert +\left \vert
\zeta \right \vert \right)  B_{m,\mathbf{k}}^{\left(  d+1\right)  }\left(
t,\boldsymbol{x};\alpha_{1},\alpha_{2},\zeta_{1},\zeta_{2}\right) \\
& \ \ \ \ \ \ \ \ \ \ \  \ \ \ \quad=\left(  \left \vert \mathbf{k}\right \vert +\left \vert \zeta \right \vert
\right)  \left(  \frac{\left \vert \mathbf{k}\right \vert }{2}+\zeta
_{1}-t-1\right)  B_{m,\mathbf{k}}^{\left(  d+1\right)  }\left(  t+1,\boldsymbol{x}%
;\alpha_{1},\alpha_{2},\zeta_{1},\zeta_{2}\right)  ,
\end{align*}
\begin{align*}
iii) \ & \left(  m-\left \vert \mathbf{k}\right \vert \right)  B_{m-1,\mathbf{k}%
}^{\left(  d+1\right)  }\left(  t,\boldsymbol{x};\alpha_{1},\alpha_{2},\zeta
_{1},\zeta_{2}\right)  +B_{m,\mathbf{k}}^{\left(  d+1\right)  }\left(
t,\boldsymbol{x};\alpha_{1},\alpha_{2},\zeta_{1}+1,\zeta_{2}-1\right) \\
& \ \ \ \ \ \ \ \ \ \ \  \ \ \ \quad=\left(  m-\frac{\left \vert \mathbf{k}\right \vert }{2}+\zeta_{1}-t\right)
B_{m,\mathbf{k}}^{\left(  d+1\right)  }\left(  t,\boldsymbol{x};\alpha_{1}%
,\alpha_{2},\zeta_{1},\zeta_{2}\right)  ,
\end{align*}
\begin{align*}
iv) \ & \left(  m+\left \vert \zeta \right \vert -1\right)  B_{m,\mathbf{k}}^{\left(
d+1\right)  }\left(  t,\boldsymbol{x};\alpha_{1},\alpha_{2},\zeta_{1},\zeta
_{2}\right)  +\left(  -m+\left \vert \mathbf{k}\right \vert \right)
B_{m-1,\mathbf{k}}^{\left(  d+1\right)  }\left(  t,\boldsymbol{x};\alpha
_{1},\alpha_{2},\zeta_{1},\zeta_{2}\right) \\
& \ \ \ \ \ \ \ \ \ \ \  \ \ \ \quad=\left(  \left \vert \mathbf{k}\right \vert +\left \vert \zeta \right \vert
-1\right)  B_{m,\mathbf{k}}^{\left(  d+1\right)  }\left(  t,\boldsymbol{x}%
;\alpha_{1},\alpha_{2},\zeta_{1},\zeta_{2}-1\right)  ,
\end{align*}
\begin{align*}
v) \ & B_{m,\mathbf{k}}^{\left(  d+1\right)  }\left(  t,\boldsymbol{x};\alpha
_{1},\alpha_{2},\zeta_{1}+1,\zeta_{2}-1\right)  +\left(  \frac{\left \vert
\mathbf{k}\right \vert }{2}+\zeta_{2}+t-1\right)  B_{m,\mathbf{k}}^{\left(
d+1\right)  }\left(  t,\boldsymbol{x};\alpha_{1},\alpha_{2},\zeta_{1},\zeta
_{2}\right) \\
& \ \ \ \ \ \ \ \ \ \ \  \ \ \ \quad=\left(  \left \vert \mathbf{k}\right \vert +\left \vert \zeta \right \vert
-1\right)  B_{m,\mathbf{k}}^{\left(  d+1\right)  }\left(  t,\boldsymbol{x}%
;\alpha_{1},\alpha_{2},\zeta_{1},\zeta_{2}-1\right)  ,
\end{align*}
\begin{align*}
vi) \ & \left(  \zeta_{1}-\zeta_{2}-2t\right)  B_{m,\mathbf{k}}^{\left(  d+1\right)
}\left(  t,\boldsymbol{x};\alpha_{1},\alpha_{2},\zeta_{1},\zeta_{2}\right)
+\left(  m+\left \vert \zeta \right \vert \right)  B_{m+1,\mathbf{k}}^{\left(
d+1\right)  }\left(  t,\boldsymbol{x};\alpha_{1},\alpha_{2},\zeta_{1},\zeta
_{2}\right) \\
&\ \ \ \ \ \ \ \ \ \ \  \ \ \ \quad =\left(  m-\left \vert \mathbf{k}\right \vert \right)  B_{m-1,\mathbf{k}%
}^{\left(  d+1\right)  }\left(  t,\boldsymbol{x};\alpha_{1},\alpha_{2},\zeta
_{1},\zeta_{2}\right)  ,
\end{align*}
\begin{align*}
vii) \ & \left(  2\zeta_{2}+2t+m\right)  B_{m-1,\mathbf{k}}^{\left(  d+1\right)
}\left(  t,\boldsymbol{x};\alpha_{1},\alpha_{2},\zeta_{1},\zeta_{2}+1\right)
-\left(  m+\left \vert \zeta \right \vert \right)  B_{m,\mathbf{k}}^{\left(
d+1\right)  }\left(  t,\boldsymbol{x};\alpha_{1},\alpha_{2},\zeta_{1},\zeta
_{2}+1\right) \\
& \ \ \ \ \ \ \ \ \ \ \  \ \ \ \quad=\left(  \left \vert \mathbf{k}\right \vert +\left \vert \zeta \right \vert
\right)  B_{m-1,\mathbf{k}}^{\left(  d+1\right)  }\left(  t,\boldsymbol{x}%
;\alpha_{1},\alpha_{2},\zeta_{1},\zeta_{2}\right)  .
\end{align*}

\end{theorem}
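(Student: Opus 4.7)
The plan is to reduce each of the seven identities to a contiguous relation for the Gauss hypergeometric function ${}_2F_1$ evaluated at $z=2$. Writing
\[
B_{m,\mathbf{k}}^{(d+1)}(t,\boldsymbol{x};\alpha_{1},\alpha_{2},\zeta_{1},\zeta_{2}) = \Gamma(b)\,D_{\mathbf{k}}(\boldsymbol{x};\alpha_{1},\alpha_{2})\, F(a,b;c),
\]
where I abbreviate $F(a,b;c):={}_2F_1(a,b;c;2)$ with the identifications
\[
a = |\mathbf{k}|-m,\qquad b = \zeta_{1}+\tfrac{|\mathbf{k}|}{2}-t,\qquad c = |\mathbf{k}|+|\zeta|,
\]
we observe that the only parameter shifts appearing in (i)--(vii) are $m\mapsto m\pm 1$, $t\mapsto t\pm 1$, $\zeta_{1}\mapsto\zeta_{1}+1$ (sometimes with a compensating $\zeta_{2}\mapsto\zeta_{2}-1$), and $\zeta_{2}\mapsto\zeta_{2}+1$. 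Each of these induces a unit shift in one or two of $a$, $b$, $c$, while $D_{\mathbf{k}}(\boldsymbol{x};\alpha_{1},\alpha_{2})$ is inert and factors out of every identity.

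The second step is a bookkeeping normalization: in each relation I pull out the common factor $\Gamma(b)\,D_{\mathbf{k}}$, using $\Gamma(b+1)=b\,\Gamma(b)$ and $\Gamma(b-1)=\Gamma(b)/(b-1)$ to absorb the Gamma ratios that arise whenever $b$ shifts. After this normalization each relation becomes a statement of the form
\[
\sum_{j} A_{j}(a,b,c)\, F(a+\epsilon_{j}^{(a)},\,b+\epsilon_{j}^{(b)};\,c+\epsilon_{j}^{(c)}) = 0,
\]
with $\epsilon_{j}^{(\cdot)}\in\{-1,0,+1\}$ and $A_{j}$ polynomial. For instance, in (i) the coefficient $|\mathbf{k}|+2\zeta_{2}+2t$ equals $2(c-b)$, so (i) becomes $2(c-b)\,F(a,b;c+1) = c\bigl\{F(a,b;c)+F(a-1,b;c)\bigr\}$, which is one of Gauss's fifteen contiguous relations specialized at $z=2$. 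Relation (ii), after using $(|\mathbf{k}|/2+\zeta_{1}-t-1)\Gamma(b-1)=\Gamma(b)$, collapses to $2(c-a)\,F(a,b;c+1) = c\bigl\{F(a,b;c)+F(a,b-1;c)\bigr\}$, which is the image of (i) under the $a\leftrightarrow b$ symmetry of ${}_2F_1$.

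The third step is to invoke Gauss's contiguous relations for each of the seven normalized identities. As an alternative independent check, since $a=|\mathbf{k}|-m\in\mathbb{Z}_{\le 0}$ whenever $m\ge|\mathbf{k}|$, the series $F(a,b;c)$ is a terminating polynomial in $b$ of degree $m-|\mathbf{k}|$, so each reduced identity can also be verified by equating coefficients of the finite series in powers of $2$, at which point it becomes a purely algebraic identity among Pochhammer symbols.

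The principal obstacle is clerical: the seven relations must be processed separately, and particular care is needed in (i), (iii), (v), (vii), where a shift in $t$ or $\zeta_{1}$ changes $b$, so that the Gamma factor contributes a nontrivial polynomial prefactor that must be matched against the coefficients on the right-hand side. Relations (ii), (iv), (vi) involve only shifts of $a$ and $c$, so the Gamma factor is unchanged and these are the shortest to verify. Once all seven are cast in standard ${}_2F_1$ contiguous form, the proof concludes by direct appeal to the Gauss relations (or to the finite-series identity in the terminating case).
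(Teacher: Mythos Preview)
Your approach is essentially the same as the paper's: both reduce each of the seven identities to a Gauss contiguous relation for ${}_2F_1$ at $z=2$ via the parameter identification $a=|\mathbf{k}|-m$, $b=\zeta_1+|\mathbf{k}|/2-t$, $c=|\mathbf{k}|+|\zeta|$, after factoring out the inert $D_{\mathbf{k}}(\boldsymbol{x};\alpha_1,\alpha_2)$ and tracking the $\Gamma(b)$ prefactor. One small slip: in your final paragraph you list (ii) among the relations that ``involve only shifts of $a$ and $c$,'' but you had already (correctly) handled the $b\to b-1$ shift in (ii) a few lines earlier; this is just a bookkeeping lapse and does not affect the argument.
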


\section{Proofs of the main results}

In what follows, we proceed by giving a detailed account of the proofs of the main results.

\begin{proof}[Proof of Theorem \ref{theorem:33}]
It is derived from (\ref{jac-spec-func})
\begin{multline*}
\mathcal{F}\left(h_{m,\mathbf{k}}\left( t, \boldsymbol{x};\alpha,\zeta,\eta,\beta,\gamma,\mu \right) \right)   \\
={\displaystyle \int \limits_{-\infty}^{\infty}}
{\displaystyle \int \limits_{-\infty}^{\infty}}\cdots {\displaystyle \int \limits_{-\infty}^{\infty}}
h_{m,\mathbf{k}}\left( t, \boldsymbol{x};\alpha,\zeta,\eta,\beta,\gamma,\mu \right) e^{-i\left(  \xi_{1}x_{1}+\cdots +\xi_{d}x_{d}+\xi_{d+1}t\right)  }d\boldsymbol{x}dt  \\
    ={\displaystyle \int \limits_{0}^{1}}
{\displaystyle \int \limits_{-\infty}^{\infty}}\cdots {\displaystyle \int \limits_{-\infty}^{\infty}}
e^{-i\left(  \xi_{1}x_{1}+\cdots +\xi_{d}x_{d}\right)}g_{d}\left(
x_{1},\dots,x_{d};k_{1},\dots,k_{d},\alpha,\mu \right)   \\
   \times 2^{\zeta+\eta-1} P_{m-\left \vert \mathbf{k}\right \vert }^{(\left \vert \mathbf{k}\right \vert+\mu+\beta+\frac{d-1}{2},\gamma)}\left(1-2u\right) u^{\zeta+\frac{\left \vert \mathbf{k}\right \vert}{2}-\frac{i\xi_{d+1}}{2}-1}(1-u)^{\eta+\frac{i\xi_{d+1}}{2}-1}d\boldsymbol{x}du \\
   =\mathcal{F}\left(g_{d}\left(
x_{1},\dots,x_{d};k_{1},\dots,k_{d},\alpha,\mu \right)\right)\nonumber\\
  \times 2^{\zeta+\eta-1}{\displaystyle \int \limits_{0}^{1}} P_{m-\left \vert \mathbf{k}\right \vert }^{(\left \vert \mathbf{k}\right \vert+\mu+\beta+\frac{d-1}{2},\gamma)}\left(1-2u\right) u^{\zeta+\frac{\left \vert \mathbf{k}\right \vert}{2}-\frac{i\xi_{d+1}}{2}-1}(1-u)^{\eta+\frac{i\xi_{d+1}}{2}-1}du.
\end{multline*}
Based on the definition of Jacobi polynomials (\ref{200}) and Gamma function, one can write
\begin{multline}
\mathcal{F} \left( h_{m,\mathbf{k}}\left( t, \boldsymbol{x};\alpha,\zeta,\eta,\beta,\gamma,\mu \right) \right)  = \mathcal{F}\left(g_{d}\left(
x_{1},\dots,x_{d};k_{1},\dots ,k_{d},\alpha,\mu \right)\right) \\
\times \frac{2^{\zeta+\eta-1}\left( \left \vert \mathbf{k}\right \vert +\mu+\beta+\frac{d+1}{2}\right)_{m-\left \vert \mathbf{k}\right \vert }}{\left(  m-\left \vert \mathbf{k}
\right \vert \right)  !}
  {\displaystyle \sum \limits_{j=0}^{m-\left \vert \mathbf{k}\right \vert }}
\frac{\left(  -m+\left \vert \mathbf{k}\right \vert \right)  _{j}\left(  m+\mu+\beta+\gamma+\frac{d+1}{2}\right)
_{j}}{\left(
\left \vert \mathbf{k}\right \vert +\mu+\beta+\frac{d+1}{2}\right)  _{j}~j!}\\
   {\displaystyle \times \int \limits_{0}^{1}}
u^{j+\zeta+\frac{\left \vert \mathbf{k}\right \vert}{2}-1-\frac{i\xi_{d+1}}{2}}(1-u)^{\eta-1+\frac{i\xi_{d+1}}{2}}du \nonumber \\
   =\mathcal{F}\left(g_{d}\left(
x_{1},\dots,x_{d};k_{1},\dots,k_{d},\alpha,\mu \right)\right)2^{\zeta+\eta-1}\left(  \left \vert \mathbf{k}\right \vert +\mu+\beta+\frac{d+1}{2}\right)
_{m-\left \vert \mathbf{k}\right \vert } \\
   \times \frac{\Gamma\left(  \frac{\left \vert \mathbf{k}\right \vert}{2} +\zeta-\frac{i\xi_{d+1}}{2} \right)\Gamma\left( \eta+\frac{i\xi_{d+1}}{2} \right)
}{\left(  m-\left \vert \mathbf{k}%
\right \vert \right)  !\Gamma\left( \frac{ \left \vert \mathbf{k}\right \vert}{2} +\zeta+\eta \right)}  \\
   \times \hyper{3}{2}{-m+\left \vert \mathbf{k}\right \vert,\ m+\mu+\beta+\gamma+\frac{d+1}{2},\ \frac{\left \vert \mathbf{k}\right \vert}{2}+\zeta -\frac{i\xi_{d+1}}{2}}{\left \vert \mathbf{k}\right \vert
+\mu+\beta+\frac{d+1}{2},\ \frac{\left \vert \mathbf{k}\right \vert}{2}+\zeta+\eta}{1}.
\end{multline}
The proof follows directly from equation (\ref{18}).

Let us now focus on a special case of the general result. For $d=1$, the special function is given by

\begin{align}
h_{m,k_{1}}\left( t, x_{1};\alpha,\zeta,\eta,\beta,\gamma,\mu \right)    &  =\left(  1-\tanh^{2}x_{1}\right)^{\alpha}(1+\tanh t)^{\zeta}(1-\tanh t)^{\eta} {\mathsf{Q}}_{k_{1},m}^{n}\left(\varsigma_{1}, \varsigma_{2}\right)\nonumber \\
&  = \frac{1}{2^{k_{1}/2}}\left(  1-\tanh^{2}x_{1}\right)^{\alpha}(1+\tanh t)^{\zeta+\frac{k_{1}}{2}}(1-\tanh t)^{\eta}\\
 &   \times P_{m-k_{1}}^{(k_{1}+\beta+\mu,\gamma)}\left( -\tanh t \right)
\ C_{k_{1}}^{\left(  \mu \right)  }\left(  \tanh x_{1}\right),\nonumber
\label{1-dim}%
\end{align}
where $\varsigma_{1}=\frac{1+\tanh t}{2}$ and $\varsigma_{2}=\left( \frac{1+\tanh t}{2}\right)^{1/2} \tanh x_{1}.$ The Fourier transform corresponding to this function takes the form
\begin{multline}
\mathcal{F}
\left( h_{m,k_{1}}\left( t, x_{1};\alpha,\zeta,\eta,\beta,\gamma,\mu \right)  \right)    =\int
\limits_{-\infty}^{\infty}\int
\limits_{-\infty}^{\infty}e^{-i\xi_{1}x_{1}-i\xi_{2}t}\left(  1-\tanh^{2}x_{1}\right)^{\alpha}(1+\tanh t)^{\zeta+\frac{k_{1}}{2}} \\
   \times \frac{1}{2^{k_{1}/2}}(1-\tanh t)^{\eta}P_{m-k_{1}}^{(k_{1}+\beta+\mu,\gamma)}\left( -\tanh t \right)
\ C_{k_{1}}^{\left(  \mu \right)  }\left(  \tanh x_{1}\right)dx_{1}dt \\
   =2^{\zeta+\eta+2\alpha-2}\frac{ \left( k_{1} + \mu + \beta + 1 \right)_{m-k_{1}} \Gamma \left( \zeta +\frac{k_{1}}{2} - \frac{i\xi_{2}}{2} \right)\Gamma \left( \eta+ \frac{i\xi_{2}}{2} \right)(2\mu)_{k_{1}}}{\left( m-k_{1} \right)!{k_{1}}!\Gamma \left(\frac{k_{1}}{2}+\zeta+\eta\right)}   \\
  \times \varphi_{1}^{1}\left(  \alpha,\mu,k_{1};\xi_{1}\right) \Theta \left(  m,k_{1},\zeta,\eta,\beta,\gamma,\mu,\xi_{2}\right)
\end{multline}
where%
\[
\varphi_{1}^{1}\left( \alpha,\mu,k_{1};\xi_{1}\right)  =B\left( \alpha+\frac{i\xi_{1}}{2},\alpha-\frac{i\xi_{1}}{2}\right)  \hyper{3}{2}{-k_{1},\ k_{1}+2\mu,\ \alpha+\frac{i\xi_{1}}{2}}{\mu+\frac{1}{2},\ 2\alpha}{1},
\]
and 
\[
\Theta \left(  m,k_{1},\zeta,\eta,\beta,\gamma,\mu,\xi_{2}\right) = \hyper{3}{2}{-m+k_{1},\ m+\mu+\beta+\gamma+1,\ \frac{k_{1}}{2}+\zeta-\frac{i\xi_{2}}{2}}{k_{1}+\mu+\beta+1,\ \frac{k_{1}}{2}+\zeta+\eta}{1},
\]
equivalently, this may be expressed in terms of the continuous Hahn polynomials
$p_{m}\left(  x;a,b,c,d\right)  $ as
\begin{multline*}%
\mathcal{F}\left(  h_{m,k_{1}}\left( t, x_{1};\alpha,\zeta,\eta,\beta,\gamma,\mu \right) \right)  \\
= 2^{\zeta+\eta+2\alpha-2} \frac{ \left( k_{1} + \mu + \beta + 1 \right)_{m-k_{1}} \Gamma \left( \zeta +\frac{k_{1}}{2} - \frac{i\xi_{2}}{2} \right) \Gamma \left(\eta+ \frac{i\xi_{2}}{2} \right) (2\mu)_{k_{1}}}{\Gamma \left( \frac{k_{1}}{2} + \zeta + \eta \right) i^{m} (2\alpha)_{k_{1}} \left( \mu + \frac{1}{2} \right)_{k_{1}}}\\
\times \frac{B \left( \alpha + \frac{i\xi_{1}}{2}, \alpha- \frac{i\xi_{1}}{2} \right)}{\left( k_{1} + \mu + \beta + 1 \right)_{m-k_{1}} \left( \frac{k_{1}}{2} + \zeta+ \eta \right)_{m-k_{1}} }\\
 \times p_{m-k_{1}}\left( \frac{-\xi_{2}}{2}; \frac{k_{1}}{2} + \zeta, \gamma-\eta+ 1, \frac{k_{1}}{2} + \mu + \beta-\zeta+1, \eta\right)\\
 \times p_{k_{1}}\left( \frac{\xi_{1}}{2}; \alpha, \mu -\alpha + \frac{1}{2}, \mu-\alpha + \frac{1}{2}, \alpha \right).
\end{multline*}

\end{proof}

\begin{proof}[Proof of Theorem \ref{theorem:34}]
The proof follows by induction on  $d$. To initiate the process, we consider the case  $d=1$, which yields the particular function (\ref{jac-spec-func})
\begin{align*}
  h_{m,k_{1}}\left( t, x_{1};\alpha_1,\zeta_1,\eta_1,\beta_1,\gamma_1,\mu_1 \right)    &  =\left(  1-\tanh^{2}x_{1}\right)^{\alpha_1}(1+\tanh t)^{\zeta_1}(1-\tanh t)^{\eta_1} {\mathsf{Q}}_{k_{1},m}^{n}\left(\varsigma_{1}, \varsigma_{2}\right) \\
&  = \frac{1}{2^{k_{1}/2}}\left(  1-\tanh^{2}x_{1}\right)^{\alpha_1}(1+\tanh t)^{\zeta_1+\frac{k_{1}}{2}}(1-\tanh t)^{\eta_1}\\
 &   \times P_{m-k_{1}}^{(k_{1}+\beta_1+\mu_1,\gamma_1)}\left( -\tanh t \right)
\ C_{k_{1}}^{\left(  \mu_1 \right)  }\left(  \tanh x_{1}\right),
\end{align*}
where $\varsigma_{1}=\frac{1+\tanh t}{2}$ and $\varsigma_{2}=\left( \frac{1+\tanh t}{2}\right)^{1/2} \tanh x_{1}.$
Inserting the function
$h_{m,k_{1}}$ and its Fourier transform into Parseval's identity%
\begin{align*}
&  4\pi^{2}%
{\displaystyle \int \limits_{-\infty}^{\infty}}
{\displaystyle \int \limits_{-\infty}^{\infty}}
h_{m,k_{1}}\left(  t,x_{1};\alpha_{1},\zeta_{1},\eta_{1},\beta_{1},\gamma
_{1},\mu_{1}\right)  \overline{h_{m',k_{1}'}\left(  t,x_{1};\alpha_{2},\zeta_{2},\eta
_{2},\beta_{2},\gamma_{2},\mu_{2}\right)}  dx_{1}dt\\
&  =\int \limits_{-\infty}^{\infty}\int \limits_{-\infty}^{\infty}
\mathcal{F}
\left(  h_{m,k_{1}}\left(  t,x_{1};\alpha_{1},\zeta_{1},\eta_{1},\beta
_{1},\gamma_{1},\mu_{1}\right)  \right)  \overline{\mathcal{F}
\left(  h_{m',k_{1}'}\left(  t,x_{1};\alpha_{2},\zeta_{2},\eta_{2},\beta
_{2},\gamma_{2},\mu_{2}\right)  \right) }d\xi_{1}d\xi_{2},
\end{align*}
leads%
\begin{align*}
&  4\pi^{2}%
{\displaystyle \int \limits_{-\infty}^{\infty}}
{\displaystyle \int \limits_{-\infty}^{\infty}}
\left(  1-\tanh^{2}x_{1}\right)  ^{\alpha_{1}+\alpha_{2}}\left(  1+\tanh
t\right)  ^{\zeta_{1}+\zeta_{2}}\left(  1-\tanh t\right)  ^{\eta_{1}+\eta_{2}%
}\mathsf{Q}_{k_{1},m}^{n}\left(  \varsigma
_{1},\varsigma_{2}\right)  \mathsf{Q}_{k_{1}',m'}^{n'%
}\left(  \varsigma_{1},\varsigma_{2}\right)  dx_{1}dt\\
&  =4\pi^{2}%
{\displaystyle \int \limits_{-\infty}^{\infty}}
{\displaystyle \int \limits_{-\infty}^{\infty}}
\frac{1}{2^{\frac{k_{1}+k_{1}'}{2}}}\left(  1-\tanh^{2}x_{1}\right)
^{\alpha_{1}+\alpha_{2}}\left(  1+\tanh t\right)  ^{\zeta_{1}+\zeta_{2}%
+\frac{k_{1}+k_{1}'}{2}}\left(  1-\tanh t\right)  ^{\eta_{1}+\eta_{2}}\\
&  \times P_{m-k_{1}}^{\left(  k_{1}+\mu_{1}+\beta_{1},\gamma_{1}\right)
}\left(  -\tanh t\right)  P_{m'-k_{1}'}^{\left(  k_{1}'+\mu_{2}+\beta_{2}%
,\gamma_{2}\right)  }\left(  -\tanh t\right)  C_{k_{1}}^{\left(  \mu
_{1}\right)  }\left(  \tanh x_{1}\right)  C_{k_{1}'}^{\left(  \mu_{2}\right)
}\left(  \tanh x_{1}\right)  dx_{1}dt\\
&  =\pi^{2}2^{\zeta_{1}+\zeta_{2}+\eta_{1}+\eta_{2}+1}%
{\displaystyle \int \limits_{0}^{1}}
u^{\zeta_{1}+\zeta_{2}+\frac{k_{1}+k_{1}'}{2}-1}\left(  1-u\right)  ^{\eta
_{1}+\eta_{2}-1}P_{m-k_{1}}^{\left(  k_{1}+\mu_{1}+\beta_{1},\gamma
_{1}\right)  }\left(  1-2u\right)  P_{m'-k_{1}'}^{\left(  k_{1}'+\mu_{2}%
+\beta_{2},\gamma_{2}\right)  }\left(  1-2u\right)  du\\
&  \times%
{\displaystyle \int \limits_{-1}^{1}}
\left(  1-t^{2}\right)  ^{\alpha_{1}+\alpha_{2}-1}C_{k_{1}}^{\left(  \mu
_{1}\right)  }\left(  t\right)  C_{k_{1}'}^{\left(  \mu_{2}\right)  }\left(
t\right)  dt\\
&  =\frac{2^{2\alpha_{1}+2\alpha_{2}+\zeta_{1}+\zeta_{2}+\eta_{1}+\eta_{2}%
-4}\left(  2\mu_{1}\right)  _{k_{1}}\left(  2\mu_{2}\right)  _{k_{1}'}\left(
k_{1}+\mu_{1}+\beta_{1}+1\right)  _{m-k_{1}}\left(  k_{1}'+\mu_{2}+\beta
_{2}+1\right)  _{m'-k_{1}'}}{\left(  m-k_{1}\right)  !\left(  m'-k_{1}'\right)
!k_{1}!k_{1}'!\Gamma \left(  2\alpha_{1}\right)  \Gamma \left(  2\alpha
_{2}\right)  \Gamma \left(  \frac{k_{1}}{2}+\zeta_{1}+\eta_{1}\right)
\Gamma \left(  \frac{k_{1}'}{2}+\zeta_{2}+\eta_{2}\right)  }\\
&  \times%
{\displaystyle \int \limits_{-\infty}^{\infty}}
{\displaystyle \int \limits_{-\infty}^{\infty}}
\text{ }\Gamma \left(  \alpha_{1}+\frac{i\xi_{1}}{2}\right)  \Gamma \left(
\alpha_{1}-\frac{i\xi_{1}}{2}\right)  \Gamma \left(  \eta_{1}+\frac{i\xi_{2}%
}{2}\right)  \Gamma \left(  \zeta_{1}+\frac{k_{1}}{2}-\frac{i\xi_{2}}%
{2}\right)  \\
&  \times \overline{\Gamma \left(  \alpha_{2}+\frac{i\xi_{1}}{2}\right)
\Gamma \left(  \alpha_{2}-\frac{i\xi_{1}}{2}\right)  }\overline{\Gamma \left(
\eta_{2}+\frac{i\xi_{2}}{2}\right)  \Gamma \left(  \zeta_{2}+\frac{k_{1}'}%
{2}-\frac{i\xi_{2}}{2}\right)  }\\
&  \times \hyper{3}{2}{-k_{1},\ k_{1}+2\mu_{1},\  \alpha_{1}+\frac{i\xi_{1}}%
{2}}{2\alpha_{1},\  \mu_{1}+1/2}{1}\ \overline{\hyper{3}{2}{-k_{1}',\ k_{1}'+2\mu_{2},\  \alpha_{2}+\frac{i\xi_{1}}
{2}}{2\alpha_{2},\  \mu_{2}+1/2}{1}}\\
&  \times \hyper{3}{2}{-m+k_{1},\ m+\mu_{1}+\beta_{1}+\gamma_{1}+1,\ \frac{k_{1}%
}{2}+\zeta_{1}-\frac{i\xi_{2}}{2}}{k_{1}+\mu_{1}+\beta_{1}+1,\ \frac{k_{1}}
{2}+\zeta_{1}+\eta_{1}}{1}\\
&  \times \overline{\hyper{3}{2}{-m'+k_{1}',\ m'+\mu_{2}+\beta_{2}+\gamma_{2}+1,\ \frac{k_{1}'%
}{2}+\zeta_{2}-\frac{i\xi_{2}}{2}}{k_{1}'+\mu_{2}+\beta_{2}+1,\ \frac{k_{1}'}
{2}+\zeta_{2}+\eta_{2}}{1}}d\xi_{1}d\xi_{2}.
\end{align*}
By assuming
\begin{align*}
\mu_{1} &  =\mu_{2}=\alpha_{1}+\alpha_{2}-\frac{1}{2}\\
\beta_{1} &  =\beta_{2}=\zeta_{1}+\zeta_{2}-\alpha_{1}-\alpha_{2}-\frac{1}%
{2}\\
\gamma_{1} &  =\gamma_{2}=\eta_{1}+\eta_{2}-1
\end{align*}
and considering the orthogonality relations of (\ref{ort}) and (\ref{ort-J}) ,
it is seen that the special function%
\begin{align*}
\ A_{m,k_{1}}^{\left(  2\right)  }\left(  t,x_{1};\alpha_{1},\alpha_{2}%
,\zeta_{1},\zeta_{2},\eta_{1},\eta_{2}\right)   &  =\ D_{k_1}\left( x_1;\alpha_{1},\alpha_{2}\right) 
\Gamma \left(  \zeta_{1}+\frac{k_{1}}{2}-\frac{t}{2}\right)  \\
&  \times\hyper{3}{2}{-m+k_{1},\ m+\zeta_{1}+\zeta_{2}+\eta_{1}+\eta
_{2}-1,\ \frac{k_{1}}{2}+\zeta_{1}-\frac{t}{2}}{k_{1}+\zeta_{1}+\zeta
_{2},\ \frac{k_{1}}{2}+\zeta_{1}+\eta_{1}}{1}
\end{align*}
where 
\begin{align*}
D_{k_1}\left( x_1;\alpha_{1},\alpha_{2}\right)=\Gamma \left(  \alpha
_{1}-\frac{x_{1}}{2}\right)  \Gamma \left(  \alpha_{1}+\frac{x_{1}}{2}\right)\hyper{3}{2}{-k_{1},\ k_{1}+2\left(  \alpha_{1}+\alpha_{2}\right)
-1,\  \alpha_{1}+\frac{x_{1}}{2}}{\alpha_{1}+\alpha_{2},\ 2\alpha_{1}}{1},
\end{align*}
alternatively in terms of continuous Hahn polynomials
\begin{align*}
\ A_{m,k_{1}}^{\left(  2\right)  }\left(  t,x_{1};\alpha_{1},\alpha_{2}%
,\zeta_{1},\zeta_{2},\eta_{1},\eta_{2}\right)   &  =\frac{\left(
m-k_{1}\right)  !k_{1}!i^{-m}}{\left(  2\alpha_{1}\right)  _{k_{1}}\left(
\alpha_{1}+\alpha_{2}\right)  _{k_{1}}\left(  k_{1}+\zeta_{1}+\zeta
_{2}\right)  _{m-k_{1}}\left(  \frac{k_{1}}{2}+\zeta_{1}+\eta_{1}\right)
_{m-k_{1}}}\\
&  \times~\Gamma \left(  \alpha_{1}-\frac{x_{1}}{2}\right)  \Gamma \left(
\alpha_{1}+\frac{x_{1}}{2}\right)  \Gamma \left(  \zeta_{1}+\frac{k_{1}}%
{2}-\frac{t}{2}\right)  \\
&  \times p_{k_{1}}\left(  \frac{-ix_{1}}{2};\alpha_{1},\alpha_{2},\alpha
_{2},\alpha_{1}\right)  p_{m-k_{1}}\left(  \frac{it}{2};\zeta_{1}+\frac{k_{1}%
}{2},\eta_{2},\zeta_{2}+\frac{k_{1}}{2},\eta_{1}\right)
\end{align*}
satisfies the relation%
\begin{align*}
&  \int \limits_{-\infty}^{\infty}\int \limits_{-\infty}^{\infty}\Gamma \left(
\eta_{1}+\frac{it}{2}\right)  \Gamma \left(  \eta_{2}-\frac{it}{2}\right)
A_{m,k_{1}}^{\left(  2\right)  }\left(  it,ix_{1};\alpha_{1},\alpha_{2}%
,\zeta_{1},\zeta_{2},\eta_{1},\eta_{2}\right)  \\
&  \times \ A_{m',k_{1}'}^{\left(  2\right)  }\left(  -it,-ix_{1};\alpha
_{2},\alpha_{1},\zeta_{2},\zeta_{1},\eta_{2},\eta_{1}\right)  dx_{1}dt\\
&  =\frac{\pi^{2}2^{-2\alpha_{1}-2\alpha_{2}+5}h_{k_{1}}^{\left(  \alpha
_{1}+\alpha_{2}-\frac{1}{2}\right)  }\Gamma \left(  m+\zeta_{1}+\zeta_{2}\right)
\Gamma \left(  m-k_{1}+\eta_{1}+\eta_{2}\right)  }{\left(  \left(  k_{1}+\zeta
_{1}+\zeta_{2}\right)  _{m-k_{1}}\right)  ^{2}\left(  \left(  2\alpha
_{1}+2\alpha_{2}-1\right)  _{k_{1}}\right)  ^{2}}\\
&  \times \frac{\left(  k_{1}!\right)  ^{2}\left(  m-k_{1}\right)
!\Gamma \left(  \frac{k_{1}}{2}+\zeta_{1}+\eta_{1}\right)  \Gamma \left(
\frac{k_{1}}{2}+\zeta_{2}+\eta_{2}\right)  \Gamma \left(  2\alpha_{1}\right)
\Gamma \left(  2\alpha_{2}\right)  }{\left(  2m-k_{1}+\zeta_{1}+\zeta_{2}%
+\eta_{1}+\eta_{2}-1\right)  \Gamma \left(  m+\zeta_{1}+\zeta_{2}+\eta_{1}%
+\eta_{2}-1\right)  }\delta_{k_{1},k_{1}'}\delta_{m,m'}%
\end{align*}
where the expression for $h_{k_{1}}^{\left(  \alpha_{1}+\alpha_{2}-\frac{1}%
{2}\right)  }$ is provided in (\ref{gnorm}). Analogously, inserting
(\ref{jac-spec-func}) and (\ref{Jac-Fourier}) into Parseval's identity
(\ref{multi}), leads, after some computations, to the expected conclusion.
\end{proof}
\begin{proof}[Proof of Theorem \ref{theorem:rec1}]
We first recall some well-known contiguous relations for the hypergeometric function $_{3}F_{2}$ that might be obtained by considering the Zeilberger’s algorithm \cite{Z1} based on the works \cite{F1,R2} as follows:
\begin{align}
 \beta\hyper{3}{2}{\alpha,\ \beta+1,\ \gamma}{\delta,\ \varepsilon}{z}
-\alpha\hyper{3}{2}{\alpha+1,\ \beta,\ \gamma}{\delta,\ \varepsilon}{z}  =\left(  \beta-\alpha \right)  \hyper{3}{2}{\alpha,\ \beta,\ \gamma}{\delta,\ \varepsilon}{z},  \label{rec2}
\end{align}
\[
\delta\hyper{3}{2}{\alpha,\ \beta,\ \gamma}{\delta,\ \varepsilon+1}{z}-\varepsilon\hyper{3}{2}{\alpha,\ \beta,\ \gamma}{\delta+1,\ \varepsilon}{z} = \left(  \delta-\varepsilon \right)  \hyper{3}{2}{\alpha,\ \beta,\ \gamma}{\delta+1,\ \varepsilon+1}{z},
\]
\[
\varepsilon \left(
\delta-\alpha \right)  \hyper{3}{2}{\alpha,\ \beta,\ \gamma}{\delta+1,\ \varepsilon}{z} -\delta \left(  \varepsilon-\alpha \right)  \hyper{3}{2}{\alpha,\ \beta,\ \gamma}{\delta,\ \varepsilon+1}{z} =\alpha \left(  \delta-\varepsilon \right)  \hyper{3}{2}{\alpha+1,\ \beta,\ \gamma}{\delta+1,\ \varepsilon+1}{z},
\]
\[
\gamma \left(  \beta-\alpha \right) z \hyper{3}{2}{\alpha+1,\ \beta+1,\ \gamma+1}{\delta+1,\ \varepsilon+1}{z}   =\delta \varepsilon\hyper{3}{2}{\alpha,\ \beta+1,\ \gamma}{\delta,\ \varepsilon}{z} \\
 -\delta \varepsilon\hyper{3}{2}{\alpha+1,\ \beta,\ \gamma}{\delta,\ \varepsilon}{z},
\]
\[
\alpha \left(  \delta-\beta \right)
 \hyper{3}{2}{\alpha+1,\ \beta,\ \gamma}{\delta+1,\ \varepsilon}{z}  -\beta \left(  \delta-\alpha \right)   \hyper{3}{2}{\alpha,\ \beta+1,\ \gamma}{\delta+1,\ \varepsilon}{z}=\delta \left(  \alpha-\beta \right)  \hyper{3}{2}{\alpha,\ \beta,\ \gamma}{\delta,\ \varepsilon}{z},
 \]
\[
\delta\hyper{3}{2}{\alpha,\ \beta,\ \gamma}{\delta,\ \varepsilon}{z}
+\left(  \alpha-\delta \right)  \hyper{3}{2}{\alpha,\ \beta,\ \gamma}{\delta+1,\ \varepsilon}{z} =\alpha\hyper{3}{2}{\alpha+1,\ \beta,\ \gamma}{\delta+1,\ \varepsilon}{z}
\]
and%
\[
\frac{\alpha \beta}{\delta \varepsilon}\hyper{3}{2}{\alpha+1,\ \beta+1,\ \gamma+1}{\delta+1,\ \varepsilon+1}{z} =\hyper{3}{2}{\alpha,\ \beta,\ \gamma+1}{\delta,\ \varepsilon}{z}  -\hyper{3}{2}{\alpha,\ \beta,\ \gamma}{\delta,\ \varepsilon}{z}  .
\]
If we get $\alpha \rightarrow-m+\left \vert \mathbf{k}\right \vert ,~\beta
\rightarrow m+\left \vert \zeta \right \vert +\left \vert \eta \right \vert
-1,~\gamma \rightarrow \frac{\left \vert \mathbf{k}\right \vert }{2}+\zeta
_{1}-\frac{t}{2},~\delta \rightarrow \left \vert \mathbf{k}\right \vert
+\left \vert \zeta \right \vert ,~\varepsilon \rightarrow \frac{\left \vert
\mathbf{k}\right \vert }{2}+\zeta_{1}+\eta_{1}$ and $z\rightarrow1$ in
(\ref{rec2}), and we use the definition of the function $A_{m,\mathbf{k}%
}^{\left(  d+1\right)  }\left(  t,\boldsymbol{x};\alpha_{1},\alpha_{2},\zeta
_{1},\zeta_{2},\eta_{1},\eta_{2}\right)  $ we obtain the relation in $(i)$.
Similarly, applying the contiguous relations given above respectively gives
the relations in $(ii)-(vii)$.
\end{proof}

\begin{proof}[Proof of Theorem \ref{theorem:31}]
The Lemma \ref{prop:OPcone2} allows us to compute the Fourier transform of the function $h_{m,\mathbf{k}}\left( t,\boldsymbol{x};\alpha,\zeta,\beta,\mu \right)$ specified in (\ref{100}). It is derived from (\ref{100})
\begin{multline}
\mathcal{F}\left(  h_{m,\mathbf{k}}\left( t,\boldsymbol{x};\alpha,\zeta,\beta,\mu \right) \right)   ={\displaystyle \int \limits_{-\infty}^{\infty}}
{\displaystyle \int \limits_{-\infty}^{\infty}}\cdots {\displaystyle \int \limits_{-\infty}^{\infty}}
 h_{m,\mathbf{k}}\left( t,\boldsymbol{x};\alpha,\zeta,\beta,\mu \right) e^{-i\left(  \xi_{1}x_{1}+\cdots+\xi_{d}x_{d}+\xi_{d+1}t\right)  }d\boldsymbol{x}dt \nonumber \\
    ={\displaystyle \int \limits_{0}^{\infty}}
{\displaystyle \int \limits_{-\infty}^{\infty}}\cdots {\displaystyle \int \limits_{-\infty}^{\infty}}
e^{-i\left(  \xi_{1}x_{1}+\cdots+\xi_{d}x_{d}\right)}g_{d}\left(
x_{1},\dots,x_{d};k_{1},\dots,k_{d},\alpha,\mu \right) \nonumber  \\
   \times e^{-u/2} L_{m-\left \vert \mathbf{k}\right \vert }^{\left \vert \mathbf{k}\right \vert+\mu+\beta+\frac{d-1}{2}}\left(u\right) u^{\zeta+\frac{\left \vert \mathbf{k}\right \vert}{2} -i\xi_{d+1}-1}d\boldsymbol{x}du \nonumber\\
   =\mathcal{F}\left(g_{d}\left(
x_{1},\dots,x_{d};k_{1},\dots,k_{d},\alpha,\mu \right)\right)\nonumber\\
   \times {\displaystyle \int \limits_{0}^{\infty}}e^{-u/2} L_{m-\left \vert \mathbf{k}\right \vert }^{\left \vert \mathbf{k}\right \vert+\mu+\beta+\frac{d-1}{2}}\left(u\right) u^{\zeta+\frac{\left \vert \mathbf{k}\right \vert}{2} -i\xi_{d+1}-1}du\nonumber.
\end{multline}
If we use the definition of Laguerre polynomials (\ref{102}) and Gamma function, we arrive at
\begin{multline}
\mathcal{F}\left(  h_{m,\mathbf{k}}\left( t,\boldsymbol{x};\alpha,\zeta,\beta,\mu \right) \right)=\mathcal{F}\left(g_{d}\left(
x_{1},\dots,x_{d};k_{1},\dots,k_{d},\alpha,\mu \right)\right) \frac{\left(  \left \vert \mathbf{k}\right \vert +\mu+\beta+\frac{d+1}{2}\right)
_{m-\left \vert \mathbf{k}\right \vert }}{\left(  m-\left \vert \mathbf{k}%
\right \vert \right)  !} \nonumber \\
   \times 2^{\zeta+\frac{\left \vert \mathbf{k}\right \vert}{2} -i\xi_{d+1}%
}\Gamma \left(  \zeta+\frac{\left \vert \mathbf{k}\right \vert}{2} -i\xi_{d+1}\right)
\hyper{2}{1}{-m+\left \vert \mathbf{k}\right \vert ,\ \zeta+\frac{\left \vert
\mathbf{k}\right \vert}{2} -i\xi_{d+1}}{\left \vert \mathbf{k}\right \vert
+\mu+\beta+\frac{d+1}{2}}{2}. \nonumber
\end{multline}
Using \eqref{18}, the proof is completed.

We now turn our attention to a particular case of the general result. When $d=1$, the special function takes the form
\begin{align}
h_{m,k_{1}}\left( t, x_{1};\alpha,\zeta,\beta,\mu \right)    &  =\left(  1-\tanh^{2}x_{1}\right)^{\alpha} {\mathsf{R}}_{k_{1},m}^{n}\left(\sigma_{1}, \sigma_{2}\right) e^{-\frac{e^{t}}{2}+\zeta t} \nonumber \\
   &  =\left(  1-\tanh^{2}x_{1}\right)^{\alpha} L_{m-k_{1}}^{k_{1}+\mu+\beta}\left( e^{t} \right) e^{-\frac{e^{t}}{2}+\zeta t+\frac{k_{1}t}{2}}
\ C_{k_{1}}^{\left(  \mu \right)  }\left(  \tanh x_{1}\right),\nonumber
\label{g1dim}%
\end{align}
where $\sigma_{1}=e^{t}$ and $\sigma_{2}=e^{t/2}\tanh x_{1},$ and its Fourier transform is
\begin{multline}
\mathcal{F}
\left( h_{m,k_{1}}\left( t, x_{1};\alpha,\zeta,\beta,\mu \right)  \right)   \\
=\int \limits_{-\infty}^{\infty}\int \limits_{-\infty}^{\infty}e^{-i\xi_{1}x_{1}-i\xi_{2}t}\left(  1-\tanh^{2}x_{1}\right)^{\alpha} L_{m-k_{1}}^{k_{1}+\mu+\beta}\left( e^{t} \right) e^{-\frac{e^{t}}{2}+\zeta t+\frac{k_{1}t}{2}}
\ C_{k_{1}}^{\left(  \mu \right)  }\left(  \tanh x_{1}\right)dx_{1}dt \nonumber \\
  =\frac{2^{2\alpha+\zeta+\frac{k_{1}}{2}-i\xi_{2}-1}\left(  k_{1}+\mu+\beta+1\right)  _{m-k_{1}}~\left(
2\mu \right)  _{k_{1}}}{k_{1}!\left(  m-k_{1}\right)  !} \\ \times \Gamma \left(  \zeta+\frac{k_{1}}{2}-i\xi_{2}\right)
   \Lambda \left(  m,k_{1},\zeta,\mu,\beta ,\xi_{2}\right) \varphi_{1}^{1}\left(  \alpha,\mu,k_{1};\xi_{1}\right)
\end{multline}
where%
\[
\varphi_{1}^{1}\left(  \alpha,\mu,k_{1};\xi_{1}\right)  =B\left(  \alpha+\frac{i\xi_{1}}{2},\alpha-\frac{i\xi_{1}}{2}\right) \hyper{3}{2}{-k_{1},\ k_{1}+2\mu,\ \alpha+\frac{i\xi_{1}}{2}}{\mu+\frac{1}
{2},\ 2\alpha}{1},
\]
and
\[
\Lambda \left(  m,k_{1},\zeta,\mu,\beta ,\xi_{2}\right) =\hyper{2}{1}{-m+k_{1},\ \zeta+\frac{k_{1}}{2}-i\xi_{2}}{k_{1}+\mu+\beta+1}{2}.
\]
One may rewrite it using the continuous Hahn polynomials $p_{m}\left(  x;a,b,c,d\right)$ defined by \eqref{hahn} in the following form
\begin{align*}
\mathcal{F}
\left( h_{m,k_{1}}\left( t, x_{1};\alpha,\zeta,\beta,\mu \right)  \right)   &  =\frac{2^{2\alpha+\zeta+\frac{k_{1}}{2}-i\xi_{2}-1}\left(  k_{1}+\mu+\beta+1\right) _{m-k_{1}} \left(2\mu \right)_{k_{1}} \Gamma \left( \zeta+\frac{k_{1}}{2}-i\xi_{2}\right)}{i^{k_{1}}\left(  m-k_{1}\right)! \left(  2\alpha\right) _{k_{1}}\left(  \mu+1/2 \right)_{k_{1}}}\\
&  \times B\left(\alpha+\frac{i\xi_{1}}{2},\ \alpha-\frac
{i\xi_{1}}{2}\right) \Lambda \left(  m,k_{1},\zeta,\mu,\beta ,\xi_{2}\right)\\
&  \times p_{k_{1}}\left(  \frac{\xi_{1}}{2};\alpha,\mu-\alpha+1/2,\mu-\alpha+1/2,\alpha \right).
\end{align*}
\end{proof}

\begin{proof}[Proof of Theorem \ref{theorem:32}]
The proof follows by induction on  $d$. To initiate the process, we consider the case  $d=1$, which yields the particular function (\ref{100})
\begin{align*}
h_{m,k_{1}}\left( t, x_{1};\alpha,\zeta,\beta,\mu \right) =\left(  1-\tanh^{2}x_{1}\right)^{\alpha} L_{m-k_{1}}^{k_{1}+\mu+\beta}\left( e^{t} \right) e^{-\frac{e^{t}}{2}+\zeta t+\frac{k_{1}t}{2}}
\ C_{k_{1}}^{\left(  \mu \right)  }\left(  \tanh x_{1}\right).
\end{align*}
Using Parseval's identity, one derives

\begin{align*}
&  4\pi^{2}%
{\displaystyle \int \limits_{-\infty}^{\infty}}
{\displaystyle \int \limits_{-\infty}^{\infty}}
\left(  1-\tanh^{2}x_{1}\right)  ^{\alpha_{1}+\alpha_{2}}\exp \left(
-e^{t}+\left(  \zeta_{1}+\zeta_{2}\right)  t+\frac{\left(  k_{1}+k_{1}'\right)
t}{2}\right)  \\
&  \times L_{m-k_{1}}^{k_{1}+\mu_{1}+\beta_{1}}\left(  e^{t}\right)
L_{m'-k_{1}'}^{k_{1}'+\mu_{2}+\beta_{2}}\left(  e^{t}\right)  C_{k_{1}}^{\left(
\mu_{1}\right)  }\left(  \tanh x_{1}\right)  C_{k_{1}'}^{\left(  \mu
_{2}\right)  }\left(  \tanh x_{1}\right)  dx_{1}dt\\
&  =4\pi^{2}%
{\displaystyle \int \limits_{0}^{\infty}}
L_{m-k_{1}}^{k_{1}+\mu_{1}+\beta_{1}}\left(  u\right)  L_{m'-k_{1}'}^{k_{1}'%
+\mu_{2}+\beta_{2}}\left(  u\right)  e^{-u}u^{\zeta_{1}+\zeta_{2}+\frac
{k_{1}+k_{1}'}{2}-1}du\\
&  \times%
{\displaystyle \int \limits_{-1}^{1}}
\left(  1-v^{2}\right)  ^{\alpha_{1}+\alpha_{2}-1}C_{k_{1}}^{\left(  \mu
_{1}\right)  }\left(  v\right)  C_{k_{1}'}^{\left(  \mu_{2}\right)  }\left(
v\right)  dv\\
&  =\frac{2^{2\left(  \alpha_{1}+\alpha_{2}-1\right)  +\zeta_{1}+\zeta
_{2}+\frac{k_{1}+k_{1}'}{2}}\left(  2\mu_{1}\right)  _{k_{1}}\left(  2\mu
_{2}\right)  _{k_{1}'}\left(  k_{1}+\mu_{1}+\beta_{1}+1\right)  _{m-k_{1}%
}\left(  k_{1}'+\mu_{2}+\beta_{2}+1\right)  _{m'-k_{1}'}}{\left(  m-k_{1}\right)
!\left(  m'-k_{1}'\right)  !k_{1}!k_{1}'!\Gamma \left(  2\alpha_{1}\right)
\Gamma \left(  2\alpha_{2}\right)  }\\
&  \times%
{\displaystyle \int \limits_{-\infty}^{\infty}}
{\displaystyle \int \limits_{-\infty}^{\infty}}
\text{ }\Gamma \left(  \alpha_{1}+\frac{i\xi_{1}}{2}\right)  \Gamma \left(
\alpha_{1}-\frac{i\xi_{1}}{2}\right)  ~\overline{\Gamma \left(  \alpha
_{2}+\frac{i\xi_{1}}{2}\right)  \Gamma \left(  \alpha_{2}-\frac{i\xi_{1}}%
{2}\right)  }\\
&  \times \Gamma \left(  \zeta_{1}+\frac{k_{1}}{2}-i\xi_{2}\right)
~\overline{\Gamma \left(  \zeta_{2}+\frac{k_{1}'}{2}-i\xi_{2}\right)  }\\
&  \times\hyper{3}{2}{-k_{1},\ k_{1}+2\mu_{1},\  \alpha_{1}+\frac{i\xi_{1}}
{2}}{2\alpha_{1},\  \mu_{1}+1/2}{1}\ \overline{\hyper{3}{2}{-k_{1}',\ k_{1}'+2\mu_{2},\  \alpha_{2}+\frac{i\xi_{1}}
{2}}{2\alpha_{2},\  \mu_{2}+1/2}{1}}\\
&  \times\hyper{2}{1}{-m+k_{1},\  \zeta_{1}+\frac{k_{1}}{2}-i\xi_{2}}{k_{1}
+\mu_{1}+\beta_{1}+1}{2} \ \overline{\hyper{2}{1}{-m'+k_{1}',\  \zeta_{2}+\frac{k_{1}'}{2}-i\xi_{2}}{k_{1}'
+\mu_{2}+\beta_{2}+1}{2}}d\xi_{1}d\xi_{2}.
\end{align*}
If we choose
\begin{align*}
\mu_{1} &  =\mu_{2}=\alpha_{1}+\alpha_{2}-\frac{1}{2}\\
\beta_{1} &  =\beta_{2}=\zeta_{1}+\zeta_{2}-\alpha_{1}-\alpha_{2}-\frac{1}{2}%
\end{align*}
together with the orthogonality conditions of (\ref{ort}) and (\ref{ort-L}),
one observes that the special function%
\[
\ B_{m,k_{1}}^{\left(  2\right)  }\left(  x_{1},t;\alpha_{1},\alpha_{2}%
,\zeta_{1},\zeta_{2}\right)  =\  \Gamma \left(  \zeta_{1}+\frac{k_{1}}%
{2}-t\right)  D_{k_{1}}\left(  x_{1};\alpha_{1},\alpha_{2}\right) \hyper{2}{1}{-m+k_{1},\  \zeta_{1}+\frac{k_{1}}{2}-t}{k_{1}+\zeta
_{1}+\zeta_{2}}{2}
\]
where%
\[
D_{k_{1}}\left(  x_{1};\alpha_{1},\alpha_{2}\right)  =\Gamma \left(  \alpha
_{1}-\frac{x_{1}}{2}\right)  \Gamma \left(  \alpha_{1}+\frac{x_{1}}{2}\right)
\hyper{3}{2}{-k_{1},\ k_{1}+2\left(  \alpha_{1}+\alpha_{2}\right)
-1,\  \alpha_{1}+\frac{x_{1}}{2}}{\alpha_{1}+\alpha_{2},\ 2\alpha_{1}}{1},
\]
alternatively in terms of continuous Hahn polynomials
\begin{align*}
\ B_{m,k_{1}}^{\left(  2\right)  }\left(  x_{1},t;\alpha_{1},\alpha_{2}%
,\zeta_{1},\zeta_{2}\right)    & =\frac{k_{1}!i^{-k_{1}}}{\left(  2\alpha
_{1}\right)  _{k_{1}}\left(  \alpha_{1}+\alpha_{2}\right)  _{k_{1}}}%
\Gamma \left(  \alpha_{1}-\frac{x_{1}}{2}\right)  \Gamma \left(  \alpha
_{1}+\frac{x_{1}}{2}\right)  \Gamma \left(  \zeta_{1}+\frac{k_{1}}{2}-t\right)
\\
& \times\hyper{2}{1}{-m+k_{1},\  \zeta_{1}+\frac{k_{1}}{2}-t}{k_{1}+\zeta
_{1}+\zeta_{2}}{2}\ p_{k_{1}}\left(  \frac{-ix_{1}}{2};\alpha_{1}%
,\alpha_{2},\alpha_{2},\alpha_{1}\right)  ,
\end{align*}
satisfies the relation%
\begin{align*}
&  \int \limits_{-\infty}^{\infty}\int \limits_{-\infty}^{\infty}\  \ B_{m,k_{1}%
}^{\left(  2\right)  }\left(  it,ix_{1};\alpha_{1},\alpha_{2},\zeta_{1}%
,\zeta_{2}\right)  \ B_{m',k_{1}'}^{\left(  2\right)  }\left( -it, -ix_{1};\alpha_{1},\alpha_{2},\zeta_{1},\zeta_{2}\right)  dx_{1}dt\\
&  =\frac{\pi^{2}2^{-\left(  2\alpha_{1}+2\alpha_{2}+\zeta_{1}+\zeta_{2}%
+k_{1}\right)  +4}h_{k_{1}}^{\left(  \alpha_{1}+\alpha_{2}-\frac{1}{2}\right)
}\left(  k_{1}!\right)  ^{2}\left(  m-k_{1}\right)  !}{\left(  k_{1}+\zeta
_{1}+\zeta_{2}\right)  _{m-k_{1}}^{2}\left(  2\alpha_{1}+2\alpha_{2}-1\right)
_{k_{1}}^{2}}\\
&  \times \Gamma \left(  \zeta_{1}+\zeta_{2}+m\right)  \Gamma \left(  2\alpha
_{1}\right)  \Gamma \left(  2\alpha_{2}\right)  \delta_{k_{1},k_{1}'}%
\delta_{m,m'}%
\end{align*}
where $h_{k_{1}}^{\left(  \alpha_{1}+\alpha_{2}-\frac{1}{2}\right)  }$ is
given by (\ref{gnorm}). In the same way, an iterative application of
Parseval's identity with (\ref{15}) and (\ref{18}) yields the result.
\end{proof}

\begin{proof}[Proof of Theorem \ref{theorem:rec2}]
We first recall the well-known contiguous relations for the hypergeometric
function $_{2}F_{1}$ as follows \cite{25}:%
\begin{align}
\left(  1-z\right) \hyper{2}{1}{\alpha,\ \beta}{\gamma}{z} &
=\hyper{2}{1}{\alpha-1,\ \beta}{\gamma}{z}  -\gamma^{-1}\left(
\gamma-\beta \right)  z\hyper{2}{1}{\alpha,\ \beta}{\gamma+1}{z}
,\label{rec1}\\
\left(  1-z\right)  \hyper{2}{1}{\alpha,\ \beta}{\gamma}{z}  &
=\hyper{2}{1}{\alpha,\ \beta-1}{\gamma}{z}  -\gamma^{-1}\left(
\gamma-\alpha \right)  z\ \hyper{2}{1}{\alpha,\ \beta}{\gamma+1}{z}
,\nonumber \\
\left(  \alpha-\beta \right)  \hyper{2}{1}{\alpha,\ \beta}{\gamma}{z}
& =\alpha\hyper{2}{1}{\alpha+1,\ \beta}{\gamma}{z}  -\beta\hyper{2}{1}{\alpha,\ \beta+1}{\gamma}{z},\nonumber \\
\left(  \alpha-\gamma+1\right) \hyper{2}{1}{\alpha,\ \beta}{\gamma}{z}  & =\alpha\hyper{2}{1}{\alpha+1,\ \beta}{\gamma}{z}
-\left(  \gamma-1\right)  \hyper{2}{1}{\alpha,\ \beta}{\gamma-1}{z}
,\nonumber \\
\left(  \beta-\gamma+1\right)  \hyper{2}{1}{\alpha,\ \beta}{\gamma}{z}
& =\beta\hyper{2}{1}{\alpha,\ \beta+1}{\gamma}{z}  -\left(
\gamma-1\right)  \hyper{2}{1}{\alpha,\ \beta}{\gamma-1}{z}
,\nonumber \\
\left(  2\alpha-\gamma+z\left(  \beta-\alpha \right)  \right)  \hyper{2}{1}{\alpha,\ \beta}{\gamma}{z}  & =\alpha \left(  1-z\right)
\hyper{2}{1}{\alpha+1,\ \beta}{\gamma}{z} \nonumber \\
&\quad \quad \quad \quad \quad \quad \quad+\left(  \alpha-\gamma \right) \hyper{2}{1}{\alpha-1,\ \beta}{\gamma}{z}  ,\nonumber
\end{align}
and%
\begin{align*}
\left(  \left(  \gamma-\beta \right)  z-\alpha \right) \hyper{2}{1}{\alpha+1,\ \beta}{\gamma+1}{z} & =\gamma \left(  z-1\right)  \hyper{2}{1}{\alpha+1,\ \beta}{\gamma}{z} \\
& \quad \quad \quad \quad \quad\quad+\left(  \gamma-\alpha \right)  \hyper{2}{1}{\alpha,\ \beta}{\gamma+1}{z}.
\end{align*}
If we get $\alpha \rightarrow-m+\left \vert \mathbf{k}\right \vert ,~\beta
\rightarrow \zeta_{1}+\frac{\left \vert \mathbf{k}\right \vert }{2}%
-t,~\gamma \rightarrow \left \vert \mathbf{k}\right \vert +\left \vert
\zeta \right \vert $ and $z\rightarrow2$ in (\ref{rec1}), and we use the
definition of the function $B_{m,\mathbf{k}}^{\left(  d+1\right)  }\left(
t,\boldsymbol{x};\alpha_{1},\alpha_{2},\zeta_{1},\zeta_{2}\right)  $ we obtain the
relation in $(i)$. Similarly, applying the contiguous relations given above
respectively gives the relations in $(ii)-(vii)$.
\end{proof}

\textbf{Acknowledgements}
Not applicable.\\

\textbf{Authors' contributions}
Both authors contributed equally to this work. Both authors have read and approved the final manuscript.\\

\textbf{Funding} No funding.\\

\textbf{Data availability}
Data sharing is not applicable to this article as no data sets were generated or analyzed during the current study.\\

\section*{Declarations}
\textbf{Conflict of interest} The authors declare no competing interests.\\

\textbf{Ethical Approval} Not applicable.

\end{document}